\newtheorem{thm}{Theorem}
\newtheorem{lem}[thm]{Lemma}
\newtheorem{cor}[thm]{Corollary}
\newtheorem{prop}[thm]{Proposition}
\theoremstyle{definition}
\newtheorem{defn}[thm]{Definition}
\newtheorem{Q}[thm]{Question}
\newtheorem{ex}[thm]{Example}
\newcommand{\uh}{{\upharpoonright}}
\newcommand{\cs}{2^\omega}
\newcommand{\str}{2^{<\omega}}
\newcommand{\MLR}{\mathsf{MLR}}
\newcommand{\SR}{\mathsf{SR}}
\newcommand{\KR}{\mathsf{KR}}
\newcommand{\dom}{\mathrm{dom}}
\newcommand{\oi}{\mathrm{OI}}
\newcommand{\avg}{\mathit{Avg}}
\newcommand{\rate}{\mathit{Rate}}
\newcommand{\llb}{\llbracket}
\newcommand{\rrb}{\rrbracket}
\renewcommand{\P}{\mathcal{P}}
\newcommand{\fr}{^{\frown}}
\renewcommand{\tt}{\mathit{tt}}
\newcommand{\U}{\mathcal{U}}
\newcommand{\halts}{{\downarrow}}
\title{Randomness Extraction in Computability Theory}
\author{Douglas Cenzer and Christopher P. Porter}
\date{\today} 
\begin{document}

\maketitle 
\singlespacing

\begin{abstract}
In this article, we study a notion of the extraction rate of Turing functionals that translate between notions of randomness with respect to different underlying probability measures. We analyze several classes of extraction procedures: a first class that generalizes von Neumann's trick for extracting unbiased randomness from the tosses of a biased coin, a second class based on work of generating biased randomness from unbiased randomness by Knuth and Yao, and a third class independently developed by Levin and Kautz that generalizes the data compression technique of arithmetic coding. For the first two classes of extraction procedures, we identify a level of algorithmic randomness for an input that guarantees that we attain the extraction rate along that input, while for the third class, we calculate the rate attained along sufficiently random input sequences.
\end{abstract}

\section{Introduction}

The aim of this study is to analyze the rate of the extraction of randomness via various effective procedures using the tools of computability theory and algorithmic randomness.  Our starting point is a classic problem posed by von Neumann in \cite{Von51}, namely that of extracting unbiased randomness from the tosses of a biased coin.  Von Neumann provides an elegant solution to the problem: Toss the biased coin twice.  If the outcome is $\mathit{HH}$ or $TT$, then discard these tosses. Otherwise, if the outcome is $HT$, then output {\bf H}, and if the outcome is $TH$, then output {\bf T}. Notice in the case that the coin comes up heads with probability $p$,
\begin{itemize}
\item the probability of $\mathit{HH}$ is $p^2$, 
\item the probability of $\mathit{TT}$ is $(1-p)^2$, and
\item the probability of $\mathit{HT}$ (and that of $\mathit{TH}$) is $p(1-p)$.  
\end{itemize}
It follows from the independence of the events $H$ and $T$ that with probability one the derived sequence will be an infinite sequence in which the events {\bf H} and {\bf T} each occur with probability 1/2.

It is well known that von Neumann's procedure is rather inefficient, since on average $\frac{1}{p(1-p)}$ biased bits are required to produce one unbiased bit when the biased coin comes up heads with probability $p\in(0,1)$.  For instance, in the case that $p=\frac{1}{2}$, where we are given a fair coin to begin with, four tosses on average yield one bit of output (a rate that is four times the rate attained simply by reading off the tosses of the coin).
However, a number of improvements have been found. For instance, in \cite{Per92}, Peres studies a sequence of procedures obtained by iterating von Neumann's procedure and calculates the associated extraction rate of each such procedure. As defined by Peres, given a monotone function $\phi:\str\rightarrow\str$, the extraction rate of $\phi$ with respect to the bias $p$ is defined to be
\[
\limsup_{n\rightarrow\infty}\dfrac{E(|\phi(x_1,x_2,\dotsc,x_n)|)}{n},
\]
where the bits $x_i$ are independent and $(p,1-p)$-distributed and $E$ stands for expected value (with respect to the $p$-Bernoulli measure on $\cs$).  Setting $(\phi_k)_{k\in\omega}$ to be  the sequence of procedures defined by Peres, he proves that, when tossed a coin that comes up heads with probability $p\in(0,1)$
\[
\lim_{k\rightarrow\infty}\limsup_{n\rightarrow\infty}\dfrac{E(|\phi_k(x_1,x_2,\dotsc,x_n)|)}{n}=H(p),
\]
where $H(p)=-p\log(p)-(1-p)\log(1-p)$ is the entropy associated with the underlying source.

This notion of an extraction rate of an effective procedure has not been thoroughly studied from the point of view of computability theory (however, see Doty \cite{Doty08} and Toska \cite{Toska14}, each of which study a more local notion of rate of certain procedures applied to specific inputs).  In this article, we study a definition of the extraction rate for Turing functionals that accept their input with probability one (referred to as  \emph{almost total} functionals).  In particular, we can formalize certain randomness extraction procedures as Turing functionals and study the behavior of these functionals when applied to algorithmically random sequences.  For a number of such functionals, it is known that almost every sequence attains the extraction rate; here we provide a sufficient level of algorithmic randomness that guarantees this result. 

We consider three main examples here:
\begin{enumerate}
\item functionals defined in terms of maps on $\str$ that we call block maps, which generalize von Neumann's procedure,

\medskip

\item functionals derived from certain trees called discrete distribution generating trees (or DDG trees, for short), introduced by Knuth and Yao \cite{KnuYao76} in the study of non-uniform random number generation, and

\medskip

\item a procedure independently developed by Levin \cite{LevZvo70} and Kautz \cite{Kau91} for converting biased random sequences into unbiased random sequences.
\end{enumerate}
Notably, our analysis of the extraction rates of these three classes of examples draws upon the machinery of effective ergodic theory, using certain effective versions of Birkhoff's ergodic theorem (and, in the case of the Levin-Kautz procedure, an effective version of the Shannon-McMillan-Breiman theorem from classical information theory due to Hoyrup \cite{Hoy12}). 

The remainder of the paper is as follows.  In Section \ref{sec-bg}, we lay out the requisite background for this study.  Next, in Section \ref{sec-er}, we formally define the extraction rate of a Turing functional, derive several preliminary results, and introduce several basic examples.  We then turn to more general examples:  Turing functionals derived from block maps in Section \ref{sec-block}, Turing functionals derived from computable DDG trees in Section \ref{sec-ddg}, and the Levin-Kautz procedure in Section \ref{sec-lk}.  We conclude with several open questions in Section \ref{sec-questions}.

\section{Background}\label{sec-bg}

\subsection{Notation}
The set of finite binary strings will be written as $\str$; members of $\str$ will be written as lowercase Greek letters, $\sigma,\tau,\rho$, and so on. The set of infinite  binary sequences will be written as $\cs$; members of $\cs$ will be written as uppercase Roman letters $X,Y,Z$.  For a finite string $\sigma
\in \str$, let $|\sigma|$ denote the length of $n$.  For two strings $\sigma,\tau$, say that $\tau$ \emph{extends} $\sigma$
and write $\sigma \preceq \tau$ if $|\sigma| \leq |\tau|$ and $\sigma(i)
= \tau(i)$ for $i < |\sigma|$. For $X \in \cs$, $\sigma \prec X$ means
that $\sigma(i) = X(i)$ for $i < |\sigma|$. Let $\sigma^{\frown} \tau$
denote the concatenation of $\sigma,\tau\in\str$; we similarly define the concatenation $\sigma^{\frown}X$ of $\sigma\in\str$ and $X\in\cs$. Let $X
\uh n$ denote the string $\sigma\prec X$ with $|\sigma|=n$.  For $n<m$, $X\uh[n,m)$ denotes the string $X(n)\dotsc X(m-1)$. The empty string will be written as $\epsilon$.

Two sequences $X,Y\in\cs$ may be coded together into $Z = X \oplus Y$, where 
$Z(2n) = X(n)$ and $Z(2n+1) =
Y(n)$ for all $n$.  For a finite string $\sigma$, let $\llb\sigma\rrb$ denote $\{X \in \cs:
 \sigma \prec  X\}$. We shall refer to $\llb\sigma\rrb$ as the \emph{cylinder}
 determined by $\sigma$. Each such interval is a clopen set and the
 clopen sets are just finite unions of intervals.
 
 \subsection{Trees}
A nonempty closed set $P\subseteq\cs$ may be identified with a tree $T_P
 \subseteq \str$ where $T_P = \{\sigma: P \cap \llb\sigma\rrb \neq
 \emptyset\}$. Note that $T_P$ has no dead ends. That is, if $\sigma
 \in T_P$, then either $\sigma^{\frown}0 \in T_P$ or $\sigma^{\frown}1
 \in T_P$ (or both).
For an arbitrary tree $T \subseteq \str$, let $[T]$ denote the
set of infinite paths through $T$; that is,  $[T]=\{X\in\cs\colon (\forall n)\;X\uh n\in T\}$.
It is well-known that $P \subseteq \cs$ is a closed set if and only if
$P = [T]$ for some tree $T$.  $P$ is a \emph{$\Pi^0_1$ class}, or an \emph{effectively
closed set}, if $P = [T]$ for some computable tree $T$.

\subsection{Turing functionals}
 Recall that a continuous function $\Phi: \cs \to \cs$ may be defined from a function $\phi: \str \to \str$, which we refer to as a \emph{generator} of $\Phi$, satisfying the conditions

\begin{enumerate}
\item[(i)] For $\sigma,\tau\in\str$, if $\sigma \preceq \tau$, then $\phi(\sigma) \preceq \phi(\tau)$.

\item[(ii)] For all $X \in \cs$, $\lim_{n\rightarrow\infty} |\phi(X \uh n)| = \infty$.
\end{enumerate}

\noindent Note by the compactness of $\cs$, a generator $\phi$ for a continuous function $\Phi$ satisfies the condition:

\begin{enumerate}
\item[(iii)] For all $\sigma\in\str$ and $m\in\omega$, there exists $n\in\omega$ such that for every $\sigma \in \{0,1\}^n$, $|\phi(\sigma)| \geq m$. 
\end{enumerate}

\noindent We then have $\Phi(X) = \bigcup_n \phi(X \uh n)$. The total  Turing functionals $\Phi: \cs \to \cs$ are those which may be defined in this manner from a computable generator $\phi: \str \to \str$.  We will sometimes refer to total Turing functionals as \emph{$\tt$-functionals.} The partial Turing functionals $\Phi:\subseteq\cs\rightarrow\cs$ are given by those $\phi: \str \to \str$ which only satisfy condition (i) (we will still refer to such functions as generators). In this case $\Phi(X) = \bigcup_n \phi(X \uh n)$
may be only a finite string. 
 
 We set $\dom(\Phi) = \{X: \Phi(X) \in \cs\}$.  For $\tau \in \str$ we also define
\[
\Phi^{-1}(\tau)=\{ \sigma\in \str : \tau \preceq \phi(\sigma)\;\&\;(\forall \sigma'\prec\sigma)\; \tau\not\preceq\phi(\sigma')\}.  
\]
In particular, by our above convention, we have $\Phi^{-1}(\epsilon)=\{\epsilon\}$. Similarly, for $S \subseteq \str$ we define $\Phi^{-1}(S) = \bigcup_{\tau \in S} \Phi^{-1}(\tau)$. For $\mathcal{A}\subseteq\cs$, we denote by $\Phi^{-1}(\mathcal{A})$ the set $\{X\in \dom(\Phi):\Phi(X) \in \mathcal{A}\}$. Note in particular that $\Phi^{-1}(\llb\tau\rrb) = \llb\Phi^{-1}(\tau)\rrb \cap \dom(\Phi)$.

\subsection{Computable measures on $\cs$}

Recall that a measure $\mu$ on $\cs$ is computable if there is a computable function $f:2^{<\omega}\times\omega\to\mathbb{Q}_2$ such that
$|\mu(\llbracket\sigma\rrbracket)-f(\sigma,i)|\leq 2^{-i}$.  For a prefix-free $V\subseteq\str$ (i.e., for $\sigma\in V$, if $\sigma\prec\tau$, then $\tau\notin V$), we set $\mu(\llb V\rrb)=\sum_{\sigma\in V}\mu(\sigma)$.  Hereafter, we will write $\mu(\llbracket\sigma\rrbracket)$ as $\mu(\sigma)$ for strings $\sigma$ and $\mu(\llbracket V\rrbracket)$ as $\mu(V)$ for $V\subseteq 2^{<\omega}$.  We also denote the \textit{Lebesgue measure} by $\lambda$, where $\lambda(\sigma)=2^{-|\sigma|}$ for $\sigma\in\str$.

\subsection{Notions of algorithmic randomness}

We assume that the reader is familiar with the basics of algorithmic randomness; see, for instance \cite{Nie09}, \cite{DowHir10}, \cite{SheUspVer17}, or the more recent \cite{FraPor20}.
Let $\mu$ be a computable measure on $\cs$.  Recall that a \emph{$\mu$-Martin-L\"of test} is a sequence $(\mathcal{U}_i)_{i\in\omega}$ of uniformly effectively open subsets of $\cs$ such that for each $i$,
\[
\mu(\mathcal{U}_i)\leq 2^{-i}.
\]
Moreover, $X\in\cs$ passes the $\mu$-Martin-L\"of test $(\mathcal{U}_i)_{i\in\omega}$ if $X\notin\bigcap_{i \in \omega}\mathcal{U}_i$.  Lastly, $X\in\cs$ is \emph{$\mu$-Martin-L\"of random}, denoted $X\in\MLR_\mu$, if $X$ passes every $\mu$-Martin-L\"of test. When $\mu$ is the Lebesgue measure $\lambda$, we often abbreviate $\MLR_\mu$ by $\MLR$. 

We can obtain alternative notions of randomness by modifying the definition of a Martin-L\"of test.  We will work with two such alternatives in this paper. Let $\mu$ be a computable measure on $\cs$ and  $X\in\cs$. 
\begin{itemize}
\item[(i)] $X$ is $\mu$-\emph{Schnorr random} (written $X\in\SR_\mu$) if and only if $X$ is not contained in any $\mu$-Martin-L\"{o}f test $(\mathcal{U}_i)_{i\in\omega}$ with the additional condition that $\mu(\U_i)$ is computable uniformly in $i$.

\item[(ii)] $X$ is $\mu$-\emph{Kurtz random} (written $X\in\KR_\mu$) if and only if $X$ is not contained in any $\Pi^0_1$ class of $\mu$-measure~$0$ (equivalently, if and only if it is not contained in any $\Sigma^0_2$ class of $\mu$-measure~$0$).
\end{itemize}
Note that $\MLR_\mu\subseteq\SR_\mu\subseteq\KR_\mu$ for every computable measure $\mu$.

We are particularly interested in the interaction between Turing functionals and computable measures on $\cs$.  
For computable measure $\mu$ on $\cs$, a Turing functional $\Phi:\cs\rightarrow\cs$ is \emph{$\mu$-almost total} if $\mu(\dom(\Phi))=1$.

\begin{lem}
A Turing functional $\Phi$ is $\mu$-almost total if and only if $\KR_\mu\subseteq\dom(\Phi)$.
\end{lem}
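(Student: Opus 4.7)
The plan is to exploit the effective descriptive complexity of $\dom(\Phi)$ together with both of the characterizations of $\mu$-Kurtz randomness given in the paper. The key observation is that, for a Turing functional $\Phi$ with computable generator $\phi$, one has
\[
X\in\dom(\Phi)\iff \lim_{n\to\infty}|\phi(X\uh n)|=\infty \iff (\forall m)(\exists n)\;|\phi(X\uh n)|\geq m,
\]
where the rightmost condition is $\Pi^0_2$ in $X$. Hence $\cs\setminus\dom(\Phi)$ is a $\Sigma^0_2$ class; in fact it is the countable union of the $\Pi^0_1$ classes $P_m=\{X:(\forall n)\;|\phi(X\uh n)|<m\}$.

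For the forward direction, suppose $\Phi$ is $\mu$-almost total, so that $\cs\setminus\dom(\Phi)$ is a $\Sigma^0_2$ class of $\mu$-measure $0$. By the parenthetical equivalence in the definition of $\KR_\mu$, no $\mu$-Kurtz random sequence can lie in such a class, so $\KR_\mu\subseteq\dom(\Phi)$.

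For the backward direction, I would first invoke the standard fact that $\mu(\KR_\mu)=1$. Indeed, $\cs\setminus\KR_\mu$ is by definition the union of all $\Pi^0_1$ classes of $\mu$-measure $0$; since there are only countably many computable trees, this is a countable union of $\mu$-null sets, hence $\mu$-null. Assuming $\KR_\mu\subseteq\dom(\Phi)$ then yields $\mu(\dom(\Phi))\geq\mu(\KR_\mu)=1$, so $\Phi$ is $\mu$-almost total.

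I do not anticipate a real obstacle here: the lemma is essentially a matching of descriptive complexity with the correct notion of randomness. The one point to verify with care is that $\dom(\Phi)$ really is $\Pi^0_2$ (equivalently, that its complement is $\Sigma^0_2$), which is immediate from the computability and monotonicity of the generator $\phi$.
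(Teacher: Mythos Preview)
Your proof is correct and follows essentially the same approach as the paper: both use that $\dom(\Phi)$ is $\Pi^0_2$ so its complement is a $\Sigma^0_2$ $\mu$-null class, which no $\mu$-Kurtz random can belong to, and both invoke $\mu(\KR_\mu)=1$ for the other direction (the paper simply says ``clearly'' where you spell out the countable-union argument). The only cosmetic difference is that the paper explicitly decomposes the $\Sigma^0_2$ null set into $\Pi^0_1$ null pieces, whereas you appeal directly to the parenthetical $\Sigma^0_2$ characterization of $\KR_\mu$ already stated in the paper.
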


\begin{proof}
If $\KR_\mu\subseteq\dom(\Phi)$, then clearly $\Phi$ is $\mu$-almost total.  For the other direction, observe that $\dom(\Phi)$ is a $\Pi^0_2$ subset of $\cs$.  Thus, if $\Phi$ is $\mu$-almost total, it follows that $\cs\setminus\dom(\Phi)$ is a $\Sigma^0_2$ $\mu$-nullset, and hence $\cs\setminus\dom(\Phi)=\bigcup_{i\in\omega}\mathcal{U}_i$ where each $\mathcal{U}_i$ is a $\Pi^0_1$ $\mu$-nullset.  Thus if $X\notin\dom(\Phi)$, there is some $i$ such that $X\in\mathcal{U}_i$, so $X$ cannot be $\mu$-Kurtz random.
\end{proof}

\section{Extraction Rates}\label{sec-er}

\subsection{The definition of extraction rate via a generator} We are interested in a version of the \emph{use function} of a Turing functional $\Phi$ which arises from a given generator $\phi$.
Let $u_{\phi}(X,n)$ be the least $m$ such that $|\phi(X \uh m)| \geq n$.  Then the \emph{extraction rate} of the computation of $Y = \Phi(X)$ from $X$ is given by the ratio 
\[
\frac{n}{u_{\phi}(X,n)},
\] 
that is, the relative amount of input from $X$ needed to compute the first $n$ values of $Y$.

There is an alternative definition which is more straightforward. The $\phi$-\emph{output/input ratio} of $\sigma$, $\oi_\phi(\sigma)$, is defined to be
\[\oi_\phi(\sigma)=\frac{|\phi(\sigma)|}{|\sigma|}.\]

\begin{lem} For any  Turing functional $\Phi$ with generator $\phi$ and any $X\in\cs$ such that $\Phi(X)\in\cs$,
\[
\lim_{n\rightarrow\infty} \dfrac{|\phi(X \uh n)|}{n} = \lim_{m\rightarrow\infty} \dfrac{m}{u_{\phi}(X,m)},
\]
provided that both limits exists. 
\end{lem}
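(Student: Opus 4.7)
The plan is to control the two ratios by the squeeze/sandwich relations that come directly from the definition of $u_\phi$. Write $a_n=|\phi(X\uh n)|$ and $b_m = u_\phi(X,m)$. Since $\Phi(X)\in\cs$, the sequence $(a_n)$ is nondecreasing and tends to infinity, and therefore so is $(b_m)$. By definition of $b_m$ as the least $k$ with $a_k\geq m$, we have the two basic inequalities
\[
a_{b_m}\geq m\quad\text{and}\quad a_{b_m-1}<m\text{ (whenever }b_m\geq 1\text{).}
\]

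Call the first limit $L$ and the second limit $M$. First I would show $L\geq M$: dividing $a_{b_m}\geq m$ by $b_m$ gives $\tfrac{a_{b_m}}{b_m}\geq \tfrac{m}{b_m}$, and since $b_m\to\infty$, the left side is a subsequence of $a_n/n$ and so tends to $L$, while the right side tends to $M$.

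Next I would show $L\leq M$ using the other inequality. From $a_{b_m-1}<m$ we get
\[
\frac{a_{b_m-1}}{b_m-1}<\frac{m}{b_m-1}=\frac{m}{b_m}\cdot\frac{b_m}{b_m-1}.
\]
As $m\to\infty$, $b_m\to\infty$, so $\tfrac{b_m}{b_m-1}\to 1$ and the right side tends to $M$; the left side is again a subsequence of $a_n/n$, which tends to $L$. Hence $L\leq M$, and combining with the previous step gives $L=M$.

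The only mild subtlety, rather than a real obstacle, is to make sure the subsequence of $(a_n/n)$ picked out by $n=b_m$ (or $n=b_m-1$) inherits the assumed limit $L$; this is immediate from the existence of $\lim_n a_n/n$ together with $b_m\to\infty$. Everything else is just rewriting the defining inequality of $u_\phi$.
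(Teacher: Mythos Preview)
Your argument is correct. It differs from the paper's proof, which is shorter: the paper singles out the ``jump times'' $m_k$ at which $|\phi(X\uh m)|$ strictly increases, sets $n_k=|\phi(X\uh m_k)|$, and observes that $u_\phi(X,n_k)=m_k$, so that
\[
\frac{|\phi(X\uh m_k)|}{m_k}=\frac{n_k}{m_k}=\frac{n_k}{u_\phi(X,n_k)}.
\]
Thus the two sequences $\bigl(|\phi(X\uh n)|/n\bigr)_n$ and $\bigl(m/u_\phi(X,m)\bigr)_m$ share a common infinite subsequence, and since both limits are assumed to exist they must coincide. Your route instead extracts two one-sided inequalities from the defining property of $u_\phi$ and sandwiches. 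The paper's version is a one-line observation once the jump points are named; your version is slightly longer but has the minor advantage that the two inequalities are reusable if one later wants to compare $\limsup$'s or $\liminf$'s separately rather than assuming both limits exist.
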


\begin{proof} Fix an input $X$.  Let $m_0 = 0$ and let $m_{k+1}$ be the least $m>m_k$ such that 
$|\phi(X \uh m)| > |\phi(X \uh m_k)|$. Let $n_k = |\phi(X\uh m_k)|$.  Then for each $k>0$, $u_{\phi}(X,n_k) = m_k$ and hence
\[
\oi_{\phi}(X \uh m_k) = \frac {n_k}{m_k} = \frac {n_k}{u_{\phi}(X,n_k)},
\]
so that the two sequences have identical infinite subsequences, and hence the limits must be equal (since they are assumed to exist).
\end{proof}

Let us write $\oi_{\phi}(X)$ for $\limsup_{n\rightarrow\infty} \oi_{\phi}(X \uh n)$; we refer to this as the \emph{$\phi$-extraction rate along} $X$.  For the specific examples of extraction rates that we calculate in the remaining sections, we will work with specific generators defined from the randomness extraction literature. 

\subsection{Canonical generators}
Suppose that we are given an almost total Turing functional and would like to determine its extraction rate.  Which generator should we use?  For instance, we would like to say that the extraction rate for a constant function should be very low and should approach 0 in the limit. However, consider the following example.

\begin{ex} \label{ex1} Let $\Phi(X) = 0^{\omega}$ for all $X \in \cs$ and let $\phi(\sigma) = 0^{|\sigma|}$ for all $\sigma \in \str$. Then $u_{\phi}(X,n) = n$ for all $n$ and thus $\lim_{n\rightarrow\infty}\frac{n}{u_{\phi}(X,n)} = 1$ for all $X$.  
\end{ex}

To avoid this problem, we can work with a canonical generator of a Turing functional, which may be defined as follows. 

\begin{defn} For any partial continuous function $\Phi$, the \emph{canonical generator} $\phi$ for $\Phi$ is defined by letting $\phi(\sigma)$ 
be the longest common initial segment of all members of $\{\Phi(X): \sigma \prec X\}$. 
\end{defn} 
 

\begin{ex}{\ }
\begin{itemize}
\item[(i)] The identity function $\phi$ on strings is the canonical generator of the identity function on $\cs$ and thus for  $X\in\cs$, the use $u_{\phi}(X,n) = n$ for all $n\in\omega$, so that $\lim_{n\rightarrow\infty} \frac{n}{u_{\phi}(X,n)} = 1$. 

\item[(ii)] If $\Phi(X) = X \oplus X$, then its canonical generator is given by $\phi(\sigma) = \sigma \oplus \sigma$ for $\sigma\in\str$ (where $\sigma\oplus\sigma$ is the finite string defined as in the infinite case). Thus for $X\in\cs$, $\lim_{n\rightarrow\infty} \frac{n}{u_{\phi}(X,n)} = \frac12$.
\end{itemize}
\end{ex}
 
Note that if $\phi$ is the canonical generator for a constant function $\Phi(X) = C$, then we have $\phi(\sigma) = C$, an infinite sequence, for every $\sigma$. To avoid this unpleasantness, we can further restrict our functions to the non-constant functions.

\begin{defn} A partial continuous function $\Phi$ is \emph{nowhere constant} if for any string $\sigma$, either $\Phi(X)$ is undefined (that is, it is a finite string) for some $X \in \llb \sigma \rrb$, or there exist $X_1 \neq X_2$ in $\llb \sigma \rrb$ such that $\Phi(X_1) \neq \Phi(X_2)$. 
\end{defn}

It is easy to see that if $\Phi$ is nowhere constant, then the canonical generator is a well-defined map taking strings to strings and satisfies condition (i) in the definition of a representative of a functional. Moreover, the canonical generator of a functional has the following nice property, which is immediate from the definition.



\begin{lem} Let $\Phi$ be a partial continuous functional on $\cs$ with canonical generator $\phi: \str \to \str$.  Then for all $\sigma$ such that $\sigma0,\sigma1\in\dom(\phi)$, if $\phi(\sigma0)\succeq\tau$ and $\phi(\sigma1)\succeq\tau$, then $\phi(\sigma)\succeq\tau$.

\end{lem}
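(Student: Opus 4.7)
The proof should be essentially a direct unwinding of the definition of canonical generator, and the paper explicitly flags it as ``immediate from the definition.'' My plan is to show that the hypothesis forces $\tau$ to be a common initial segment of every $\Phi(X)$ with $X \succ \sigma$, whence $\tau \preceq \phi(\sigma)$ by the maximality built into the definition.

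First I would unpack what $\phi(\sigma0) \succeq \tau$ means. By definition, $\phi(\sigma0)$ is the longest common initial segment of $\{\Phi(X) : \sigma0 \prec X\}$, so $\tau \preceq \phi(\sigma0)$ implies that $\tau$ is itself a common initial segment of that set; that is, $\tau \preceq \Phi(X)$ for every $X \in \llb \sigma 0\rrb$ (where if some $\Phi(X)$ is only a finite string, this still makes sense and forces $|\Phi(X)| \geq |\tau|$). The symmetric statement holds for $\sigma1$.

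Next I would fix an arbitrary $X \in \llb\sigma\rrb$. Since $\llb\sigma\rrb = \llb\sigma0\rrb \cup \llb\sigma1\rrb$, we have $X \in \llb\sigma0\rrb$ or $X \in \llb\sigma1\rrb$, and in either case the preceding paragraph gives $\tau \preceq \Phi(X)$. Hence $\tau$ is a common initial segment of $\{\Phi(X) : \sigma \prec X\}$, and since $\phi(\sigma)$ is the \emph{longest} such common initial segment, $\tau \preceq \phi(\sigma)$, as required.

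There is essentially no obstacle here; the only minor subtlety is handling the case where some $\Phi(X)$ is only a finite string, which is absorbed by reading $\preceq$ in its usual sense on $\str \cup \cs$. The hypothesis $\sigma0, \sigma1 \in \dom(\phi)$ guarantees that the defining sets for $\phi(\sigma0)$ and $\phi(\sigma1)$ are nonempty, so the definition of canonical generator applies without degenerate cases.
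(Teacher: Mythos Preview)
Your argument is correct and is exactly the unwinding the paper has in mind when it says the lemma is ``immediate from the definition''; the paper gives no explicit proof beyond that remark.
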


Next we consider the computability of the canonical represenation.  

\begin{prop} \label{prop1} If $\Phi$ is a total, nowhere constant Turing functional, then the canonical generator $\phi$ of $\Phi$ is computable. 
\end{prop}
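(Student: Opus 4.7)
The plan is to show that the binary relation $R(\sigma,\tau) \equiv \tau \preceq \phi(\sigma)$ is uniformly decidable in $\sigma$ and $\tau$; once this is in hand, $\phi(\sigma)$ can be computed by searching bit by bit for the longest such $\tau$, with termination guaranteed by nowhere-constancy forcing $\phi(\sigma)$ to be a finite string.

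First, I would fix a computable generator $\psi$ of $\Phi$ (which exists by totality) and observe that $\tau \preceq \phi(\sigma)$ holds iff every $X \succ \sigma$ satisfies $\tau \preceq \Phi(X)$, i.e., $\llb\sigma\rrb \subseteq \Phi^{-1}(\llb\tau\rrb)$. To decide this, I dovetail through the extensions $\rho$ of $\sigma$, computing $\psi(\rho)$, and call $\rho$ \emph{bad} if $|\psi(\rho)| \geq |\tau|$ but $\psi(\rho) \not\succeq \tau$. The search halts as soon as either (A)~some bad $\rho$ appears (return ``no''), or (B)~for some $N$, every extension of $\sigma$ of length $|\sigma|+N$ satisfies $\psi(\rho) \succeq \tau$ (return ``yes''). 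If the true answer is ``yes'', condition (iii) applied to $\psi$ with $m = |\tau|$ supplies an $N$ at which every extension of $\sigma$ of length $|\sigma|+N$ has $|\psi(\cdot)| \geq |\tau|$; since no $\rho$ is bad, condition (B) is detected. If the true answer is ``no'', some $X \succ \sigma$ has $\Phi(X) \not\succeq \tau$; since $\Phi$ is total, $\Phi(X)$ is infinite and disagrees with $\tau$ at a position $k<|\tau|$, so by condition (ii) some $\rho = X\uh n$ has $|\psi(\rho)| \geq |\tau|$ and $\psi(\rho)(k) = \Phi(X)(k) \neq \tau(k)$, witnessing (A).

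With $R$ decidable, I compute $\phi(\sigma)$ greedily: starting from $\tau_0 = \epsilon$, at each stage $k$ I use $R$ to check whether $\tau_k\fr 0 \preceq \phi(\sigma)$ or $\tau_k\fr 1 \preceq \phi(\sigma)$; if exactly one holds I set $\tau_{k+1}$ accordingly, and if neither holds I output $\phi(\sigma) = \tau_k$. Because $\Phi$ is nowhere constant and total, there exist $X_1, X_2 \in \llb\sigma\rrb$ with $\Phi(X_1) \neq \Phi(X_2)$, so $|\phi(\sigma)|$ is bounded by the first coordinate where these sequences disagree and the loop halts.

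The main obstacle is the ``no'' direction of the decidability of $R$: without totality, the condition $\Phi(X) \not\succeq \tau$ could be witnessed by $\Phi(X)$ being a finite proper prefix of $\tau$, which a finite computation from $\psi$ could never definitively rule out. Totality closes this gap by ensuring that any disagreement with $\tau$ manifests as a specific bit flip at a finite position, which is detected once $\psi(\rho)$ is long enough; nowhere-constancy then plays its separate role by bounding the length of $\phi(\sigma)$ so the greedy loop actually terminates.
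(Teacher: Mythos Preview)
Your proof is correct and follows essentially the same strategy as the paper: both arguments exploit condition~(iii) (compactness from totality) to certify that a candidate prefix $\tau$ lies below $\phi(\sigma)$, use totality to detect an explicit bit of disagreement when it does not, and invoke nowhere-constancy to ensure $\phi(\sigma)$ is a finite string. The only difference is organizational---the paper first finds a level $m$ where disagreement appears and then reads off the common prefix among all $\psi(\sigma')\uh m$ for $\sigma'$ of length $n_m$, whereas you first isolate the decidability of the prefix relation $R(\sigma,\tau)$ and then build $\phi(\sigma)$ greedily---but the underlying computational content is the same.
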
  

\begin{proof}  Let $\Phi: \cs \to \cs$ be a total, nowhere constant Turing functional and let $\psi: \str \to \str$ be some computable generator of $\Phi$. Then we can compute, for each $m$, a value $n_m$ such that $|\psi(\sigma)| \geq m$ for all strings $\sigma$ of length $\geq n_m$.  Now let a string $\sigma$ be given.  Since $\Phi$ is nowhere constant, we can compute a least value $m$ such that there exist $\tau_0,\tau_1\succeq \sigma$ such that $|\psi(\tau_i)|\geq m$ for $i\in\{0,1\}$ and $\psi(\tau_0)\neq\psi(\tau_1)$.
  Then the value $\phi(\sigma)$ of the canonical generator can be computed by letting $\phi(\sigma)$ be the common initial segment $\psi(\sigma') \uh m$ for all $\sigma'$ of length $n_m$ extending $\sigma$. 
\end{proof}

On the other hand, if $\Phi$ is only a partial computable, nowhere constant function, then the canonical generator of $\Phi$ need not be computable. 

\begin{ex} \label{ex2} Let $E$ be some noncomputable c.e.\ set and define $\Phi(0^n 1 X) = X$ if $n \in E$ and undefined otherwise. 
Then for the canonical generator $\phi$ of $\Phi$, we have $\phi(0^n 1 \sigma) = \sigma$ if $n\in E$ and $\phi(0^n1 \sigma) = \epsilon$, otherwise.  We can modify this to get an almost total functional by letting $\Phi(0^n 1 X) = X$ if either $n \in E$ or if $X(i) = 1$ for some $i$. In this case, for each $k\in\omega$, we have $\phi(0^n1 0^k) = 0^k$ if $n \in E$ and equals $\epsilon$ otherwise. 
\end{ex}

\begin{prop} \label{prop2} For any partial computable Turing functional $\Phi$, the canonical generator $\phi$ is computable in $\emptyset'$. 
\end{prop}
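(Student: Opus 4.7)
The plan is to give a $\emptyset'$-effective procedure that, on input $\sigma$, outputs $\phi(\sigma)$, by leveraging any fixed computable generator $\psi$ of $\Phi$.

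First I would characterize the relation $\tau \preceq \phi(\sigma)$ in terms of $\psi$: this holds iff every $X \in \dom(\Phi) \cap \llb\sigma\rrb$ has $\Phi(X) \succeq \tau$, equivalently, there is no $\rho \succeq \sigma$ satisfying $|\psi(\rho)| \geq |\tau|$, $\psi(\rho)\uh|\tau| \neq \tau$, and $\llb \rho \rrb \cap \dom(\Phi) \neq \emptyset$. The first two conjuncts on $\rho$ are uniformly decidable, so the question reduces to deciding, for each such candidate $\rho$, whether some infinite extension of $\rho$ lies in $\dom(\Phi)$.

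Next I would show that this latter question is $\emptyset'$-decidable. Since $\dom(\Phi) = \{X : \lim_m |\psi(X\uh m)| = \infty\}$ and $\psi$ is monotone, the condition ``$\llb\rho\rrb \cap \dom(\Phi) = \emptyset$'' is, in this setting, expressible via a $\Pi^0_2$ statement about the growth of $\psi$-values along extensions of $\rho$, decidable using the oracle $\emptyset'$ by a limit-computability (Shoenfield) argument. Granted this, $\tau \preceq \phi(\sigma)$ is $\emptyset'$-decidable uniformly in $(\sigma,\tau)$, and I then compute $\phi(\sigma)$ iteratively: at each step, test whether appending $0$ or $1$ to the currently-determined prefix still satisfies $\tau \preceq \phi(\sigma)$, and halt when neither extension works (which yields $\phi(\sigma) = \epsilon$ in the edge case that $\dom(\Phi) \cap \llb\sigma\rrb = \emptyset$, since in that case no one-bit extension of $\epsilon$ qualifies).

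The main obstacle I anticipate is substantiating the $\Pi^0_2$ character of ``$\llb\rho\rrb \cap \dom(\Phi) = \emptyset$'': for an arbitrary $\Pi^0_2$ subclass of $\cs$ this statement is $\Sigma^1_1$, so the argument must exploit the specific tree structure imposed by a computable monotone $\psi$ (essentially, that the bounded-$|\psi|$ property propagates down the tree) to effect the collapse. Once this is done, weaving it together with the uniformly c.e.\ enumeration of $\tau$-incompatible $\rho$'s produces the desired $\emptyset'$-algorithm for $\phi$.
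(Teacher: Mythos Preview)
There is a genuine gap, and it lies exactly where you anticipate it. The predicate ``$\llb\rho\rrb \cap \dom(\Phi) = \emptyset$'' is \emph{not} $\emptyset'$-decidable in general, and the monotone tree structure of $\psi$ does not force the collapse you hope for. Given any $\Sigma^0_2$ set $E$, say $n \in E \iff (\exists m)(\forall k)\,R(n,m,k)$ with $R$ computable, define $\psi(0^n 1\, 0^m 1\, \sigma) = \sigma \uh \min(|\sigma|, k_0)$ where $k_0$ is least with $\neg R(n,m,k_0)$, and $\psi = \epsilon$ on all strings not of this form. This $\psi$ is computable and monotone, and $\dom(\Phi) \cap \llb 0^n 1\rrb \neq \emptyset$ iff $(\exists m)(\forall k)\,R(n,m,k)$ iff $n \in E$; so nonemptiness is $\Sigma^0_2$-complete. (Note too that even a genuinely $\Pi^0_2$ predicate is only co-c.e.\ in $\emptyset'$, not $\emptyset'$-decidable; Shoenfield gives $\Delta^0_2$, not $\Pi^0_2$.)

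There is also a mismatch with the paper's definition of the canonical generator: $\phi(\sigma)$ is the longest common initial segment of $\Phi(X)$ over \emph{all} $X \succ \sigma$, where $\Phi(X)=\bigcup_n\psi(X\uh n)$ is the (possibly finite) output string even when $X \notin \dom(\Phi)$. Your restriction to $X\in\dom(\Phi)$ yields a different, generally longer, string. With the correct definition the paper's argument is immediate: $\tau \preceq \phi(\sigma)$ holds iff $(\exists n)(\forall \sigma'\in 2^n)[\,\sigma\preceq\sigma' \implies \psi(\sigma')\succeq\tau\,]$, since by monotonicity of $\psi$ the strings $\sigma'\succeq\sigma$ with $\psi(\sigma')\not\succeq\tau$ form a finitely branching tree, and if that tree is infinite K\"onig's lemma produces a path $X$ with $\Phi(X)\not\succeq\tau$. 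Thus $\tau\preceq\phi(\sigma)$ is $\Sigma^0_1$, the graph of $\phi$ is a difference of c.e.\ sets, and $\phi$ is $\emptyset'$-computable without any reference to $\dom(\Phi)$.
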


\begin{proof}  Let $\psi$ be some computable generator of $\Phi$.  Then for the canonical generator $\phi$, we have $\phi(\sigma) = \tau$ if and only if
\begin{itemize}
\item $(\exists n) (\forall \sigma' \in \{0,1\}^n)[ \sigma \prec \sigma' \implies \tau \preceq \psi(\sigma')]$, and

\item for $i=0,1$,  $\neg (\exists n) (\forall \sigma' \in \{0,1\}^n)[ \sigma \prec \sigma' \implies \tau^\frown i \preceq \psi(\sigma')]$.
\end{itemize}
Thus the graph of $\psi$ is a $\Sigma^0_2$ set and in fact is a difference of c.e.\ sets.
\end{proof}

Lastly, we can define the output/input ratio of a Turing functional given in terms of its canonical generator.

\begin{defn}
Let $\Phi$ be a partial Turing functional with canonical generator $\phi$. The $\Phi$-\emph{output/input ratio} given by $\sigma$, $\oi_\Phi(\sigma)$, is defined to be
\[
\oi_\Phi(\sigma)=\frac{|\phi(\sigma)|}{|\sigma|}.
\]
Similarly, for $X\in\cs$ we define $\oi_\Phi(X)$ to be
\[
\limsup_{n\rightarrow\infty}\frac{|\phi(X\uh n)|}{n}.
\]
We refer to $\oi_\Phi(X)$ as the \emph{$\Phi$-extraction rate along $X$}.
\end{defn}

%

\subsection{Average output/input ratios}

For a given generator $\phi$ of a Turing functional $\Phi$, we would like to define the \emph{average} $\phi$-output/input ratio.  However, such an average depends on an underlying probability measure on $\cs$.  Since we are interested, at least in part, in Turing functionals that extract unbiased randomness from biased random inputs, we need to consider average $\phi$-output/input ratios parametrized by an underlying measure. 


\begin{defn}
Given $\phi: \str \to \str$, the \emph{average $\phi$-output/input ratio} for strings of length $n$ with respect to $\mu$, denoted $\avg(\phi,\mu,n)$, is defined to be
\[\avg(\phi,\mu,n)=\sum_{\sigma\in2^n}\mu(\sigma)\oi_\phi(\sigma).\]
\end{defn}
\noindent Equivalently, we have
\[
\avg(\phi,\mu,n)=\frac{1}{n}\sum_{\sigma\in2^n}\mu(\sigma)|\phi(\sigma)|.
\]
Note that this is the $\mu$-average value of $\oi_\phi(X \uh n)$ over the space $\cs$, since this function is constant on each interval $\llb \sigma \rrb$. That is, if we fix $n$ and let $F_n(X) = \oi_{\phi}(X \uh n)$, then $F_n$ is a computable map from $\cs$ to $\mathbb{R}$ and the average value of $F_n$ on $\cs$ is given by 
\[
\int_{\cs} F_n(X) d\mu(X).
\]

We consider the behavior of this average in the limit, which leads to the following definition (which is adapted from one provided by Peres in \cite{Per92}).

\begin{defn}
For a function $\phi: \str \to \str$, the $\mu$-extraction rate of $\phi$, denoted $\rate(\phi,\mu)$, is defined to be
\[\rate(\phi,\mu)=\limsup_{n\rightarrow\infty}\avg(\phi,\mu,n).\]
In the case that $\phi$ is the canonical generator of a functional $\Phi$, we further define
\[
\rate(\Phi,\mu)=\rate(\phi,\mu).
\]
\end{defn}

%
%
%



\begin{ex} \label{ex3} Let $\Phi(X) = X \oplus X$, with a generator given by $\phi(\sigma) = \sigma \oplus \sigma$ (which, as noted above, is the canonical generator of $\Phi$). Then $|\phi(\sigma)| = 2 |\sigma|$ and hence $\oi_{\phi}(\sigma) = 2$ for all strings $\sigma$.  Thus the average output/input $\phi$ ratio is 2. Certainly $u_{\phi}(X,2n) = n$ but at the same time  
$u_{\phi}((\sigma \oplus \sigma) \fr i) = i+1$, so that $u_{\phi}(X,2n-1) = n$ and hence $\frac{2n-1}{u_{\phi}(X,2n-1)} = 2 - \frac 1n$.
Thus these rates agree in the limit but not at each level. Since $\oi_\phi(X \uh n) = 2$ for all $n$, we have the limit $\oi_{\phi}(X) = 2$ for all $X$ and hence the average output input ratio over all $X \in \cs$ is 
\[
\rate(\Phi,\mu) =\rate(\phi,\mu) = \int_{\cs} \oi_{\phi}(X)d\mu = 2,
\]
where the limit exists. Moreover, $\lim_{n\rightarrow\infty} \frac{n}{u_{\phi}(X,n)} = 2$ as well. 
\end{ex}

An interesting problem is to determine for which Turing functionals the $\limsup$ in the definition of extraction rate can be replaced with a limit. The following is an example where the limit does not exist.

\begin{ex}
Given a fixed function $\alpha: \omega \to \omega \setminus \{0\}$, define the total functional $\Phi_{\alpha}(X)$ for any input $X$ to be the infinite concatenation of the strings $X(i)^{\alpha(i)}$.  Thus if $\alpha(n) = 2$ for all $n$, then $\Phi_{\alpha}(X) = X \oplus X$. If $\alpha(n) =n+1$, then 
\[
\Phi_{\alpha}(X) = X(0)X(1)X(1)X(2)X(2)X(2)X(3)\dots.
\]  Now let $\alpha^*(n) = \sum_{i <n} \alpha(i)$, which is a strictly increasing function. It is clear that for \emph{any} strictly increasing function $\beta: \omega \to \omega$, there is a function $\alpha$ such that $\beta =\alpha^*$ and that, in general, $\alpha$ is computable if and only if $\alpha^*$ is computable. Fix $\alpha$ and $\beta = \alpha^*$ and let $\phi$ be the canonical generator of $\Phi_{\alpha}$.  Then we have $|\phi(X \uh n)| = \beta(n)$ for each $n\in\omega$, so that $\oi_\phi(\sigma) = \frac{\beta(n)}{n}$ for any string $\sigma$ of length $n$. Now the behavior of this limit is completely arbitrary.  For example, let $\beta(0) = 1$ and let $\beta(2^n+i) = 2^{n+1} + i$ for all $n$ and for all $i<2^n$.  Then $\oi_\phi(\sigma) = 2$ whenever $|\sigma|$ is a power of 2, but $\oi_\phi(\sigma) = \frac{2^{n+1}+i}{2^n+i}$ for $|\sigma| = 2^n+i$ and in particular, if $|\sigma| = 2^{n+1} -1$, then $\oi_\phi(\sigma) = \frac{3 \cdot 2^n - 1}{2 \cdot 2^n -1}$.  Thus $\limsup_{n\rightarrow\infty} \avg(\phi,\mu,n) = 2$ whereas   $\liminf_{n\rightarrow\infty} \avg(\phi,\mu,n) = 1.5$.
\end{ex}

For the $\lim_{n\rightarrow\infty}\avg(\phi,\mu,n)$ to exist,  the function $\phi$ must be regular in the relative amount of input needed for a given amount of output. The authors have studied some families of functions for which this is the case. First, there are the so-called \emph{online} continuous (or computable) functions, which compute exactly one bit of output for each bit of input (see \cite{CenRoj18}).  On the other hand, there are the \emph{random}
continuous functions which produce regularity in a probabilistic sense.  For example, the random continuous functions as defined by Barmpalias et al. \cite{BarBroCen08} produce outputs which are roughly $\frac 23$ as long, on the average, as the inputs.  See also \cite{CenPor15}.

Another example for which the limsup in the definition of rate is actually a limit is given by the following result.

\begin{lem}\label{lem-bct}
Suppose there exists some $c\in\omega$ such that $|\phi(\sigma)|\leq c|\sigma|$ for all $\sigma\in\str$ and that  there is some $r\in\mathbb{R}$ such that
\[
\lim_{n\rightarrow\infty}\dfrac{|\phi(X\uh n)|}{n}=r
\]
for $\mu$-almost every $X\in\cs$.  Then $\rate(\phi,\mu)=r$.
\end{lem}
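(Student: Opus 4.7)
The plan is to recognize $\avg(\phi,\mu,n)$ as the integral of the bounded random variable $F_n(X) = |\phi(X\uh n)|/n$ against $\mu$, and then apply the dominated convergence theorem. The paper already observes (in the discussion following the definition of $\avg(\phi,\mu,n)$) that
\[
\avg(\phi,\mu,n) = \int_{\cs} F_n(X)\, d\mu(X),
\]
because $F_n$ is constant on each cylinder $\llb\sigma\rrb$ with $|\sigma|=n$, taking value $\oi_\phi(\sigma)$ there.

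The first step is to use the uniform bound $|\phi(\sigma)|\leq c|\sigma|$ to obtain $0\leq F_n(X)\leq c$ for every $n\in\omega$ and every $X\in\cs$. Since $\mu$ is a probability measure on $\cs$, the constant function $c$ is $\mu$-integrable and dominates the sequence $(F_n)_{n\in\omega}$. The second step is to invoke the hypothesis that $F_n(X)\to r$ for $\mu$-almost every $X$, which provides the pointwise (a.e.) convergence needed to apply the dominated convergence theorem. Applying it yields
\[
\lim_{n\rightarrow\infty} \avg(\phi,\mu,n) = \lim_{n\rightarrow\infty}\int_{\cs} F_n(X)\, d\mu(X) = \int_{\cs} r\, d\mu(X) = r.
\]

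Since the ordinary limit exists and equals $r$, the $\limsup$ agrees with it, and therefore $\rate(\phi,\mu) = r$ by definition. There is no real obstacle here: the role of the hypothesis $|\phi(\sigma)|\leq c|\sigma|$ is precisely to supply an integrable dominating function so that a.e.\ convergence of the output/input ratios can be transferred to convergence of their $\mu$-averages. One minor point worth noting is that the boundedness hypothesis cannot be dropped outright, since without it the $F_n$ could escape to infinity on a small but growing portion of $\cs$, which would let the averages diverge from $r$ despite pointwise convergence.
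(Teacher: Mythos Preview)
Your proof is correct and follows essentially the same approach as the paper: both use the bound $|\phi(X\uh n)|/n\leq c$ to apply the dominated convergence theorem, interchanging the limit with the integral defining $\avg(\phi,\mu,n)$ to conclude that the limit (and hence the $\limsup$) equals $r$.
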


\begin{proof}

Since there is some $c$ such that $\dfrac{|\phi(X\uh n)|}{n}\leq c$ for all $X\in\cs$, by the dominated convergence theorem, 
\begin{align*}
r=\int_{\cs} \lim_{n\rightarrow\infty}\dfrac{|\phi(X\uh n)|}{n}d\mu(X)&=\lim_{n\rightarrow\infty}\int_{\cs} \dfrac{|\phi(X\uh n)|}{n}d\mu(X)=\\
&=\lim_{n\rightarrow\infty}\avg(\phi,\mu,n)=\rate(\phi,\mu).
\end{align*}
\end{proof}

 In the next three sections, we consider several examples of Turing functionals $\Phi$ given by generators $\phi$ for which the following two conditions hold:
 \begin{itemize}
 \item[(i)] $\lim_{n\rightarrow\infty}\avg(\phi,\mu,n)$ exists (for an appropriate choice of the measure $\mu$), and 
 \item[(ii)] $\oi_\phi(X)=\lim_{n\rightarrow\infty}\oi_\phi(X\uh n)=\rate(\phi,\mu)$ for all sufficiently $\mu$-random sequences $X$.
 \end{itemize}
 That is, the extraction rate of $\phi$ is attained along sufficiently random inputs of $\Phi$.


\section{The Rate of Block Functionals}\label{sec-block}

For fixed $n\in\omega$, an \emph{$n$-block map} is a function $\phi: \str\rightarrow \str$ satisfying the following property: Given $\sigma\in\str$, we first write $\sigma={\sigma_1}^\frown\dotsc^\frown{\sigma_k}^\frown\tau$, where $|\sigma_i|=n$ for $i=1,\dotsc,k$ and $|\tau|<k$.  Then we have
\[
\phi(\sigma)=\phi(\sigma_1)^\frown\dotsc^\frown\phi(\sigma_k).
\]
That is, the behavior of $\phi$ is completely determined by its values of strings of length $n$ (and is undefined on all strings of length $k<n$).
An \emph{$n$-block functional} is a Turing functional $\Phi$ that has an $n$-block map $\phi$ as its canonical generator.  In this case we refer to $\phi$ as the $n$-block map associated to $\Phi$.  (Note that every $n$-block map can be extended to an $nk$-block map for $k\in\omega$ that induces the same functional.  Thus, the requirement that an $n$-block functional has an $n$-block map as a canonical generator ensures that an $n$-block functional isn't also an $nk$-block functional for every $k\in\omega$.)
We say that an $n$-block map $\phi:\str\rightarrow \str$ is \emph{non-trivial} if $|\phi(\sigma)|>0$ for some $\sigma\in 2^n$.

Block maps show up in the literature on randomness extraction, where typically one attempts to extract a sequence of unbiased random bits from a biased source.  For example, the 2-block map $\phi:\str\rightarrow\str$ defined by setting 
\begin{itemize}
\item $\phi(10)=0$,
\item $\phi(01)=1$, and
\item $\phi(00)=\phi(11)=\epsilon$
\end{itemize}
is precisely von Neumann's procedure.  Other examples of block maps in the randomness extraction literature are the randomizing functions studied by Elias in \cite{Eli72}, the iterations of von Neumann's procedure studied by Peres in \cite{Per92}, and extracting procedures studied by Pae in \cite{Pae16}.

We will determine the extraction rate of a $n$-block function with respect to a certain class of measures.  An \emph{$n$-step Bernoulli measure} is a Bernoulli measure on $(2^n)^\omega$. That is, an $n$-step Bernoulli measure is obtained by taking an infinite product of copies of some fixed measure on the set $2^n$.  Clearly, an $n$-step Bernoulli measure extends naturally to a measure on $\cs$.  Hereafter, we will use the term \emph{$n$-step Bernoulli measure} to refer to this extension.  Recall that a measure $\mu$ on $\cs$ is \emph{positive} if $\mu(\sigma)>0$ for all $\sigma\in\str$.

\begin{prop}\label{prop-at}
Let $n\in\omega$.  Suppose that $\mu$ is a positive $n$-step Bernoulli measure on $\cs$ and $\phi: \str\rightarrow\str$ is a non-trivial $n$-block map with associated $n$-block functional $\Phi$.  Then $\Phi$ is $\mu$-almost total.  
\end{prop}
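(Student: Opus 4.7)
\medskip

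The plan is to reduce the question of $\mu$-almost totality to a standard Borel--Cantelli-style argument on the i.i.d.\ sequence of length-$n$ blocks of $X$.

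First, I would identify the set of ``productive'' blocks: let
\[
A=\{\sigma\in 2^n : |\phi(\sigma)|>0\}.
\]
Non-triviality of $\phi$ gives $A\neq\emptyset$. Because $\mu$ is a positive $n$-step Bernoulli measure, every cylinder $\llb\sigma\rrb$ with $\sigma\in 2^n$ has positive $\mu$-measure, so the underlying measure $\mu_0$ on $2^n$ satisfies $p := \mu_0(A)>0$.

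Next, I would use the $n$-block structure of $\phi$ to express membership in $\dom(\Phi)$ in terms of the blocks $B_i(X):=X\uh[in,(i+1)n)$. For any $k\in\omega$,
\[
\phi(X\uh kn)=\phi(B_0(X))\fr\phi(B_1(X))\fr\cdots\fr\phi(B_{k-1}(X)),
\]
and for $m$ with $kn\leq m<(k+1)n$ the extra incomplete block contributes nothing to $\phi(X\uh m)$. Hence $|\phi(X\uh m)|=\sum_{i<\lfloor m/n\rfloor}|\phi(B_i(X))|$, and therefore $X\in\dom(\Phi)$ if and only if $B_i(X)\in A$ for infinitely many $i$.

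Finally, under the $n$-step Bernoulli measure $\mu$ the random variables $B_0,B_1,B_2,\ldots$ are i.i.d.\ with $\mu\{B_i\in A\}=p>0$. By the (second) Borel--Cantelli lemma, or equivalently by the strong law of large numbers applied to the indicators $\mathbf{1}_{\{B_i\in A\}}$, with $\mu$-probability one infinitely many $B_i(X)$ lie in $A$. Combined with the previous step, this yields $\mu(\dom(\Phi))=1$, i.e.\ $\Phi$ is $\mu$-almost total.

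There is no real obstacle here; the only step that requires a little care is the identification of $\dom(\Phi)$ with the set of $X$ whose block sequence visits $A$ infinitely often, which relies on the $n$-block map hypothesis and the fact that an incomplete trailing block contributes nothing to $\phi(X\uh m)$. Everything else is standard probability applied to an i.i.d.\ sequence of strictly positive-probability events.
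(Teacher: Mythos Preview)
Your proof is correct and is essentially the same as the paper's: where you set $A=\{\sigma\in 2^n:|\phi(\sigma)|>0\}$ and invoke Borel--Cantelli on the i.i.d.\ block sequence, the paper works with the complementary set $S=2^n\setminus A$, observes $\mu(S^\omega)=0$ since $\sum_{\tau\in S}\mu(\tau)<1$, and writes the complement of $\dom(\Phi)$ as a countable union of null shifts of $S^\omega$. The characterization of $\dom(\Phi)$ and the underlying measure computation are identical; only the packaging differs.
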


\begin{proof}
Let $S=\{\tau\in 2^n\colon \phi(\tau)=\epsilon\}$, which is not equal to $2^n$ since $\phi$ is non-trivial.  Let $\U=S^\omega$, the set of all infinite sequences built up by concatenating members of $S$.  Since $\mu$ is positive and $S\neq 2^n$, $\sum_{\tau\in S} \mu(\tau)<1$, from which it follows that $\mu(\U)=0$.  Next, for each $\sigma\in(2^n)^{<\omega}$ such that $|\sigma|=nk$ for some $k\in\omega$, let $\U_\sigma=\{\sigma^\frown X\colon X\in \U\}$.  Clearly $\mu(\U_\sigma)=\mu(\sigma)\cdot\mu(\U)=0$, since $\mu$ is an $n$-step Bernoulli measure.  Then $\dom(\Phi)=\cs\setminus \bigcup_{\sigma\in (2^n)^{<\omega}}\U_\sigma$, from which it follows that $\mu(\dom(\Phi))=1$.
\end{proof}

\begin{thm}\label{thm-rate}
Let $\mu$ be a positive $n$-step Bernoulli measure on $\cs$ and $\phi: 2^n\rightarrow\str$ a non-trivial $n$-block map with associated $n$-block functional $\Phi$.  Then 
\[
\rate(\Phi,\mu)=\rate(\phi,\mu)=\avg(\phi,\mu,n)
\]
\end{thm}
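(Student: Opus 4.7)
The plan is to prove this by direct computation, leveraging the block structure of $\phi$ together with the product structure of the $n$-step Bernoulli measure $\mu$. Since $\phi$ is by definition the canonical generator of the $n$-block functional $\Phi$, the equality $\rate(\Phi,\mu)=\rate(\phi,\mu)$ is immediate from the definition of $\rate(\Phi,\mu)$. Thus the entire content of the theorem is the second equality $\rate(\phi,\mu)=\avg(\phi,\mu,n)$.

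First I would handle the values $m=kn$ for $k\geq 1$. Any $\sigma\in 2^{kn}$ decomposes uniquely as $\sigma=\sigma_1{}^\frown\cdots{}^\frown\sigma_k$ with each $|\sigma_i|=n$, and the $n$-block property gives $|\phi(\sigma)|=\sum_{i=1}^k|\phi(\sigma_i)|$. Because $\mu$ is an $n$-step Bernoulli measure, $\mu(\sigma)=\prod_{i=1}^k\mu(\sigma_i)$, so
\[
\sum_{\sigma\in 2^{kn}}\mu(\sigma)|\phi(\sigma)|=\sum_{i=1}^k\sum_{\sigma_i\in 2^n}\mu(\sigma_i)|\phi(\sigma_i)|\;\prod_{j\neq i}\sum_{\sigma_j\in 2^n}\mu(\sigma_j)=k\sum_{\tau\in 2^n}\mu(\tau)|\phi(\tau)|.
\]
Dividing by $kn$ and recognizing the right-hand sum as $n\cdot\avg(\phi,\mu,n)$ yields $\avg(\phi,\mu,kn)=\avg(\phi,\mu,n)$.

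Next I would extend to arbitrary lengths $m=kn+r$ with $0\leq r<n$. Writing $\sigma\in 2^m$ as $\sigma_1{}^\frown\cdots{}^\frown\sigma_k{}^\frown\tau$ with $|\tau|=r$, the definition of an $n$-block map discards the trailing block $\tau$ in the output, so $|\phi(\sigma)|=\sum_{i=1}^k|\phi(\sigma_i)|$ exactly as before. The same independence argument, now with an extra factor of $\sum_{\tau\in 2^r}\mu(\tau)=1$, gives
\[
\avg(\phi,\mu,m)=\frac{1}{m}\cdot k\cdot n\cdot\avg(\phi,\mu,n)=\frac{kn}{kn+r}\avg(\phi,\mu,n).
\]
As $m\to\infty$, we have $kn/(kn+r)\to 1$, so the ordinary limit $\lim_{m\to\infty}\avg(\phi,\mu,m)$ exists and equals $\avg(\phi,\mu,n)$. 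In particular the $\limsup$ defining $\rate(\phi,\mu)$ equals $\avg(\phi,\mu,n)$, completing the proof.

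There is no real obstacle: the statement is essentially a bookkeeping consequence of independence across disjoint blocks of length $n$. The only step that requires care is the treatment of the trailing block of length $r<n$, which contributes to the denominator $m=kn+r$ but not to $|\phi(X\uh m)|$; positivity of $\mu$ and non-triviality of $\phi$ play no role in this calculation (they are needed only to secure that $\Phi$ is $\mu$-almost total, which is provided by Proposition~\ref{prop-at}, so that $\rate(\Phi,\mu)$ is even meaningful as a rate along a.e.~trajectory).
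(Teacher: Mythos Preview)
Your proof is correct and follows essentially the same approach as the paper: compute $\avg(\phi,\mu,kn)=\avg(\phi,\mu,n)$ using independence of the blocks, then handle the residual tail of length $r<n$. Your treatment of the tail is in fact slightly sharper than the paper's---you compute $\avg(\phi,\mu,kn+r)=\frac{kn}{kn+r}\avg(\phi,\mu,n)$ exactly and thereby show that the full limit (not just the $\limsup$) exists, whereas the paper only observes the inequality $\avg(\phi,\mu,kn+i)\le\avg(\phi,\mu,kn)$ and passes to the $\limsup$ along the subsequence of multiples of $n$.
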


\begin{proof}
We first note that if we consider the bits $\tau(0),\dotsc,\tau(n-1)$ of $\tau\in 2^n$ as a sequence of random variables, then  the expected value of $|\phi(\tau)|$ is
\[
E\bigl(|\phi(\tau(0),\dotsc,\tau(n-1))|\bigr)=\sum_{\sigma\in 2^n}\mu(\sigma)|\phi(\sigma)|,
\]
from which it follows that 
\[
\avg(\phi,\mu,n)=\dfrac{1}{n}E\bigl(|\phi(\tau(0),\dotsc,\tau(n-1))|\bigr).
\]

For $k\in\omega$, given a string of length $\tau={\tau_1}^\frown\cdots^\frown\tau_k$ of length $nk$ (where $|\tau_i|=n$ for $i=1,\dotsc,k$), since $\mu$ is an $n$-step Bernoulli measure, the blocks $\tau_1,\dotsc,\tau_k$ are independent.  Thus, the $\mu$-expected number of output bits for a string of length $nk$ is
\begin{align*}
E\bigl(|\phi(\tau(0),\dotsc,\tau(nk-1))|\bigr)&=\sum_{i=1}^kE\bigl(|\phi(\tau_i(0),\dotsc,\tau_i(n-1))|)\\
&=\sum_{i=1}^kn\cdot\avg(\phi,\mu,n)=nk\cdot\avg(\phi,\mu,n).
\end{align*}
Thus
\[
\avg(\phi,\mu,nk)=\frac{1}{nk}E\bigl(|\phi(\tau(0),\dotsc,\tau(nk-1))|\bigr)=\avg(\phi,\mu,n).
\]
For $k\in\omega$ and $i<n$, we have $\avg(\phi,\mu,nk+i)\leq \avg(\phi,\mu, nk)$, since the expected number of output bits of strings for inputs of length $nk+i$ is equal to the expected number of output bits for inputs of length $nk$.  It thus follows that 
\[
\rate(\Phi,\mu)=\rate(\phi,\mu)=\limsup_{k\rightarrow\infty}\avg(\phi,\mu,k)=\lim_{k\rightarrow\infty}\avg(\phi,\mu,nk)=\avg(\phi,\mu,n).
\]

\end{proof}

\begin{thm}\label{thm-nblock}
  Given $n\in\omega$, let $\mu$ be a computable, positive $n$-step Bernoulli measure on $\cs$, and let $X\in\cs$ be $\mu$-Schnorr random.  Then for every  non-trivial $n$-block map $\phi:\str\rightarrow\str$ with associated $n$-block functional $\Phi$,
\[
\oi_\Phi(X)=\rate(\Phi,\mu).
\]
\end{thm}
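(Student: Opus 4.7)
The plan is to apply an effective version of Birkhoff's ergodic theorem at Schnorr random points to the dynamical system $(\cs,\mu,T)$, where $T\colon \cs\to\cs$ is the shift by $n$ bits, i.e., $T(X)(i)=X(n+i)$. Since $\mu$ is an $n$-step Bernoulli measure, it is computable, $T$-invariant, and $T$-ergodic. Define $f\colon \cs\to\mathbb{R}$ by $f(X)=|\phi(X\uh n)|$. This is a bounded computable function (depending only on the first $n$ bits of its input and bounded by $M=\max_{\sigma\in 2^n}|\phi(\sigma)|$), so it is $L^1$-computable. By Theorem~\ref{thm-rate},
\[
\int_{\cs} f\,d\mu=\sum_{\sigma\in 2^n}\mu(\sigma)|\phi(\sigma)|=n\cdot\rate(\phi,\mu).
\]

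Next, I would use the $n$-block property of $\phi$ to unfold the prefix of $\Phi(X)$ as a Birkhoff sum: for each $k\geq 1$,
\[
|\phi(X\uh nk)|=\sum_{i=0}^{k-1}|\phi((T^iX)\uh n)|=\sum_{i=0}^{k-1}f(T^iX).
\]
The effective Birkhoff ergodic theorem for Schnorr randoms (valid for $L^1$-computable integrands on a computable ergodic system) then gives, for every $\mu$-Schnorr random $X$,
\[
\lim_{k\to\infty}\frac{1}{k}\sum_{i=0}^{k-1}f(T^iX)=\int_{\cs}f\,d\mu=n\cdot\rate(\phi,\mu),
\]
so that $\lim_{k\to\infty}\frac{|\phi(X\uh nk)|}{nk}=\rate(\phi,\mu)$. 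To upgrade this subsequential convergence to convergence along every $m$, I would note that for $nk\leq m<n(k+1)$ the extra at most $n-1$ bits of input contribute at most $M$ extra output bits, so
\[
\frac{|\phi(X\uh nk)|}{n(k+1)}\;\leq\;\frac{|\phi(X\uh m)|}{m}\;\leq\;\frac{|\phi(X\uh nk)|+M}{nk}.
\]
Both outer quantities tend to $\rate(\phi,\mu)$, so the squeeze gives $\lim_{m\to\infty}\oi_\phi(X\uh m)=\rate(\phi,\mu)$, which is exactly $\oi_\Phi(X)=\rate(\Phi,\mu)$. Note also that $\oi_\phi(X\uh m)$ is meaningful for all $m$ since by Proposition~\ref{prop-at} and the preceding lemma we have $\SR_\mu\subseteq\KR_\mu\subseteq\dom(\Phi)$.

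The main obstacle is simply ensuring that the effective Birkhoff theorem is applicable at the Schnorr level: one needs $L^1$-computability of the integrand, computability and ergodicity of the transformation with respect to $\mu$, and a version of the theorem that draws its conclusion for every Schnorr random $X$. The first is immediate because $f$ is a bounded computable cylinder function, the second follows from the Bernoulli structure of $\mu$ relative to the $n$-shift, and the third is available in the literature on effective ergodic theory. Everything else is bookkeeping on blocks of length $n$.
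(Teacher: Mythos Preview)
Your proposal is correct and follows essentially the same approach as the paper: apply the effective Birkhoff ergodic theorem for Schnorr randoms (the paper cites the Franklin--Towsner version for bounded computable observables) to the $n$-shift with the observable reading off the output length of the first $n$-block, then use Theorem~\ref{thm-rate} to identify the integral with $\rate(\phi,\mu)$. Your write-up is in fact slightly more careful than the paper's in making explicit the squeeze from the subsequence $m=nk$ to arbitrary $m$; note, though, that by the definition of an $n$-block map the trailing partial block contributes nothing, so in your sandwich one actually has $|\phi(X\uh m)|=|\phi(X\uh nk)|$ exactly.
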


To prove Theorem \ref{thm-nblock}, we first need to develop some background.  Let $T:\cs\rightarrow\cs$ be the $n$-shift operator; that is, for $X\in\cs$ and $\sigma\in 2^n$, $T(\sigma^\frown X)=X$. For an $n$-step Bernoulli measure $\mu$ on $\cs$, $T$ is $\mu$-invariant, i.e., for any $\tau\in\str$, $\mu(\tau)=\mu(T^{-1}(\llb\tau\rrb))$.  Indeed, for any cylinder $\llb\tau\rrb$, 
\[
T^{-1}(\llb\tau\rrb)=\bigcup\llb\{\sigma\tau\colon \sigma\in 2^n\}\rrb.
\]
Thus,
\[
\mu\left(T^{-1}(\llb\tau\rrb)\right)=\sum_{\sigma\in2^n}\mu(\sigma\tau)=\mu(\tau)\sum_{\sigma\in 2^n}\mu(\sigma)=\mu(\tau).
\]
Recall that a $\mu$-invariant transformation $T:\cs\rightarrow \cs$ is ergodic if for any $\mathcal{A}\subseteq\cs$ such that $T^{-1}(\mathcal{A})=\mathcal{A}$, we have $\mu(\mathcal{A})=0$ or $\mu(\mathcal{A})=1$.  The following lemma is a useful characterization of ergodic transformations on $\cs$ (see \cite[Theorem 5.1.5, Theorem 6.3.4(1)]{Sil08}).

\begin{lem}\label{lem-ergodic}
Let $\mu$ be a measure on $\cs$ and let $T:\cs\rightarrow\cs$ be $\mu$-invariant.  Then $T$ is ergodic if and only if 
\[
\lim_{n\rightarrow\infty}\frac{1}{n}\sum_{i=0}^{n-1}\mu(T^{-i}\llb\sigma\rrb\cap\llb\tau\rrb)=\mu(\sigma)\mu(\tau)
\]
for all $\sigma,\tau\in\str$.
\end{lem}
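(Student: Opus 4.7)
The plan is to prove both directions by reducing to the Birkhoff ergodic theorem, treating the cylinder condition as a weak Cesàro mixing statement.

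For the forward direction, assume $T$ is ergodic. Fix $\sigma,\tau\in\str$ and apply the Birkhoff ergodic theorem to the indicator function $\mathbf{1}_{\llb\sigma\rrb}$, which is bounded and $\mu$-integrable. Ergodicity gives that for $\mu$-a.e.\ $X\in\cs$,
\[
\lim_{n\rightarrow\infty}\frac{1}{n}\sum_{i=0}^{n-1}\mathbf{1}_{\llb\sigma\rrb}(T^i X)=\int_{\cs}\mathbf{1}_{\llb\sigma\rrb}\,d\mu=\mu(\sigma).
\]
Multiplying by $\mathbf{1}_{\llb\tau\rrb}$ (which is bounded) and appealing to the dominated convergence theorem, I can interchange limit and integral to get
\[
\lim_{n\rightarrow\infty}\frac{1}{n}\sum_{i=0}^{n-1}\mu\bigl(T^{-i}\llb\sigma\rrb\cap\llb\tau\rrb\bigr)=\mu(\sigma)\mu(\tau),
\]
which is exactly the desired identity. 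This is the routine half.

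For the backward direction, suppose the Cesàro identity holds on every pair of cylinders. The key observation is that, by finite additivity, the identity extends immediately from cylinders to the algebra of clopen subsets of $\cs$ (finite unions of cylinders). I then want to extend it to all Borel sets. Given Borel sets $\mathcal{A},\mathcal{B}$ with $\mu(\mathcal{A}),\mu(\mathcal{B})<\infty$ and $\varepsilon>0$, choose clopen sets $\mathcal{A}',\mathcal{B}'$ with $\mu(\mathcal{A}\triangle \mathcal{A}'),\mu(\mathcal{B}\triangle \mathcal{B}')<\varepsilon$ (available because the clopen algebra generates the Borel $\sigma$-algebra and $\mu$ is inner/outer regular on $\cs$). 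Since $\mu$ is $T$-invariant, $\mu(T^{-i}\mathcal{A}\triangle T^{-i}\mathcal{A}')<\varepsilon$, so $|\mu(T^{-i}\mathcal{A}\cap\mathcal{B})-\mu(T^{-i}\mathcal{A}'\cap \mathcal{B}')|<2\varepsilon$ uniformly in $i$. Averaging and letting $\varepsilon\to 0$ yields the Cesàro identity for arbitrary Borel $\mathcal{A},\mathcal{B}$.

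Now take any $T$-invariant Borel set $\mathcal{A}$ and apply the extended identity with $\mathcal{B}=\mathcal{A}$. Since $T^{-i}\mathcal{A}=\mathcal{A}$ for all $i$, the left-hand side is simply $\mu(\mathcal{A})$, while the right-hand side is $\mu(\mathcal{A})^2$. Hence $\mu(\mathcal{A})=\mu(\mathcal{A})^2$, forcing $\mu(\mathcal{A})\in\{0,1\}$, which is exactly ergodicity. The main obstacle is the approximation step in the reverse direction: one must verify carefully that symmetric-difference approximations by clopen sets survive both the application of $T^{-i}$ (which is where $\mu$-invariance is crucial) and the Cesàro averaging uniformly in $i$. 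Since the statement is standard and is cited to \cite{Sil08}, I would likely just refer the reader there rather than reproduce the argument in full.
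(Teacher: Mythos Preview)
Your argument is correct and is the standard one. The paper itself does not give a proof of this lemma at all: it simply cites \cite[Theorem 5.1.5, Theorem 6.3.4(1)]{Sil08} and moves on, which is essentially what you suggest doing in your final sentence. So there is nothing to compare on the paper's side; your write-up is already more than the paper provides.
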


The following result is straightforward, but we include it here for the sake of completeness.

\begin{lem}
The $n$-shift on $\cs$ is ergodic with respect to an $n$-step Bernoulli measure.
\end{lem}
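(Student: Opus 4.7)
The plan is to apply Lemma \ref{lem-ergodic}, the Ces\`aro-average characterization of ergodicity. Since $T$-invariance of $\mu$ was already verified in the paragraph preceding that lemma, it suffices, for arbitrary $\sigma, \tau \in \str$, to show that
\[
\lim_{N\rightarrow\infty}\frac{1}{N}\sum_{i=0}^{N-1}\mu(T^{-i}\llb\sigma\rrb\cap\llb\tau\rrb)=\mu(\sigma)\mu(\tau).
\]

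First I would unpack the structure of the intersection. Since $T$ is the $n$-shift, $T^{-i}\llb\sigma\rrb$ is precisely the event $\{X : X(ni+j)=\sigma(j) \text{ for all } 0\le j<|\sigma|\}$, while $\llb\tau\rrb$ is the event $\{X : X(j)=\tau(j) \text{ for all } 0\le j<|\tau|\}$. The key step is the observation that as soon as $i \geq \lceil |\tau|/n \rceil$, the constraint from $\llb\tau\rrb$ touches only the first $\lceil |\tau|/n \rceil$ aligned $n$-blocks, while the constraint from $T^{-i}\llb\sigma\rrb$ touches only blocks $i, i+1, \dotsc, i + \lceil |\sigma|/n \rceil - 1$, so the two constraints involve disjoint sets of $n$-block positions. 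Because $\mu$ is the extension of a product measure on $(2^n)^{\omega}$, events depending on disjoint sets of block positions are independent, giving
\[
\mu(T^{-i}\llb\sigma\rrb\cap\llb\tau\rrb) = \mu(T^{-i}\llb\sigma\rrb)\cdot\mu(\llb\tau\rrb) = \mu(\sigma)\,\mu(\tau),
\]
where the last equality uses the $T$-invariance already noted. Thus all but finitely many summands in the Ces\`aro average are identically $\mu(\sigma)\mu(\tau)$, and the stated limit follows immediately.

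The main (and genuinely minor) obstacle is justifying the factoring carefully when $|\tau|$ or $|\sigma|$ is not a multiple of $n$: a cylinder whose boundary falls strictly inside an $n$-block must be expressed, via the product structure of $\mu$, as the sum over extensions to full $n$-blocks consistent with the cylinder, and it is this decomposition that underwrites the independence claim for events supported on disjoint ranges of aligned blocks. Once that routine bookkeeping is dispatched, the conclusion is immediate from Lemma \ref{lem-ergodic}.
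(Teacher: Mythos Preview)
Your proposal is correct and follows essentially the same approach as the paper's proof: both apply Lemma~\ref{lem-ergodic}, extend $\llb\tau\rrb$ to a union of cylinders of length divisible by $n$, and then use the product structure of $\mu$ to see that $\mu(T^{-i}\llb\sigma\rrb\cap\llb\tau\rrb)=\mu(\sigma)\mu(\tau)$ for all sufficiently large $i$, from which the Ces\`aro limit is immediate.
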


\begin{proof}
We apply Lemma \ref{lem-ergodic}.  Let $\sigma,\tau\in\str$ be given.  Then there is some $k\in\omega$ and $m$ with $0\leq m< n$ such that $|\tau|=nk+m$.  Then for $j=2^{n-m}$, there are strings $\tau_1,\dotsc\tau_j$ of length $n(k+1)$ such that $\llb\tau\rrb=\bigcup_{i=1}^j\llb\tau_i\rrb$.  Then $T^{-(k+1)}(\llb\sigma\rrb)=\bigcup\llb\{\rho\sigma\colon \rho\in 2^{n(k+1)}\}\rrb$.  Note that for $\rho\in 2^{n(k+1)}$, 
\begin{equation}\label{eq1}
\mu\left(T^{-(k+1)}(\llb\sigma\rrb)\cap\llb\rho\rrb\right)=\mu(\sigma)\mu(\rho)
\end{equation}
Then we have
\[
T^{-(k+1)}(\llb\sigma\rrb)\cap\llb\tau\rrb=\bigcup_{i=1}^j(T^{-(k+1)}\left(\llb\sigma\rrb)\cap\llb\tau_i\rrb\right)
\]
and hence
\begin{align*}
\mu\left(T^{-(k+1)}(\llb\sigma\rrb)\cap\llb\tau\rrb\right)&=\sum_{i=1}^j\mu\left(T^{-(k+1)}(\llb\sigma\rrb)\cap\llb\tau_i\rrb\right)\\
&=\sum_{i=1}^j\mu(\sigma)\mu(\tau_i) &&  \hfill \text{by (\ref{eq1})}\\
&=\mu(\sigma)\mu(\tau)
\end{align*}
A similar argument shows that $\mu\left(T^{-{k'}}\!(\llb\sigma\rrb)\cap\llb\tau\rrb\right)=\mu(\sigma)\mu(\tau)$ for all $k'\geq k+1$.  It follows that
\[
\lim_{n\rightarrow\infty}\frac{1}{n}\sum_{i=0}^{n-1}\mu(T^{-i}\llb\sigma\rrb\cap\llb\tau\rrb)=\mu(\sigma)\mu(\tau),
\]
and hence by Lemma \ref{lem-ergodic}, $T$ is ergodic.
\end{proof}

The last ingredient we will use in the proof of Theorem \ref{thm-nblock} is the following effective version of Birkhoff's Ergodic Theorem due to Franklin and Towsner:
\begin{thm}[Franklin-Towsner \cite{FraTow14}]\label{thm-ergodic1}
Let $\mu$ be a computable measure on $\cs$ and let $T:\cs\rightarrow\cs$ be a computable, $\mu$-invariant, ergodic transformation.  Then for any bounded computable function $F$ and any $\mu$-Schnorr random $X\in\cs$,
\[
\lim_{k\rightarrow\infty}\frac{1}{k}\sum_{i=0}^{k-1}F(T^i(X))=\int F d\mu.
\]
\end{thm}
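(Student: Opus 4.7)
The plan is to produce, for every $\mu$-Schnorr random $X$, the ergodic convergence by showing that its failure set is captured by a Schnorr test. Concretely, I would construct a Schnorr test $(\U_m)_{m\in\omega}$ such that any $X$ with $\limsup_k |S_k(X) - I| > 0$, where $S_k(X) = \frac{1}{k}\sum_{i=0}^{k-1} F(T^iX)$ and $I = \int F\,d\mu$, lies in $\bigcap_m \U_m$.

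First I would set up the computable groundwork. Since $F$ is bounded and computable with $\|F\|_\infty \leq M$ for some computable $M$, and since both $\mu$ and $T$ are computable, $I$ is a computable real and the maps $S_k$ are uniformly computable $[-M,M]$-valued functions. Then I would reduce to simple cylinder functions: build a computable sequence $F_n = \sum_{\sigma\in 2^{\ell_n}} c_\sigma \mathbf{1}_{\llb\sigma\rrb}$ with rational coefficients such that $\|F - F_n\|_1 \leq 2^{-n}$ at a computable rate, using that $F$ can be uniformly computably approximated on finitely many cylinders. For such $F_n$, the Birkhoff sums $S_k F_n$ are piecewise constant on cylinders, so for any rational $\epsilon$ the set $\{X : |S_k F_n(X) - \int F_n\,d\mu| > \epsilon\}$ is a finite union of cylinders with \emph{computable} $\mu$-measure --- which is precisely what Schnorr testing requires, as opposed to the merely c.e.\ bounds that suffice for Martin-L\"of testing.

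Next I would invoke the Koopman--von Neumann maximal ergodic inequality applied to the mean-zero function $F_n - \int F_n\,d\mu$ to bound the tail $\mu\bigl(\bigl\{X : \sup_{k\geq N} |S_k F_n(X) - \int F_n\,d\mu| > \epsilon\bigr\}\bigr)$. For each $m$, pick $n(m)$ so $\|F - F_{n(m)}\|_1 \leq 2^{-(m+2)}$, then pick $N_m$ large enough and a cutoff $K_m$ truncating the supremum to finitely many $k$, so that $\U_m$ becomes a finite union of cylinders with exactly computable $\mu$-measure bounded by $2^{-m}$. The triangle inequality $|S_k(X) - I| \leq |S_k(F - F_{n(m)})(X)| + |S_k F_{n(m)}(X) - \int F_{n(m)}\,d\mu| + |\int F_{n(m)}\,d\mu - I|$, together with a second application of the maximal inequality to control the first summand uniformly in $k$, shows that any $X \notin \bigcup_m \U_m$ satisfies $\lim_k S_k(X) = I$.

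The main obstacle is exactly what separates Schnorr tests from Martin-L\"of tests: the measures $\mu(\U_m)$ must be uniformly computable as reals, not merely uniformly c.e.-bounded by $2^{-m}$. This forces the finite-cylinder reduction at every stage where a $\mu$-measure is taken, and requires the truncation at $K_m$; the bookkeeping must confirm that the truncation error, the $L^1$-approximation error $\|F - F_{n(m)}\|_1$, and the maximal-inequality tail estimate each stay well below $2^{-m}$ without appealing to non-computable data. Once this is in place, the contrapositive delivers the theorem for every $\mu$-Schnorr random $X$.
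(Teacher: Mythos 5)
This theorem is not proved in the paper; it is quoted from Franklin and Towsner \cite{FraTow14} and used as a black box in the proof of Theorem~\ref{thm-nblock}, so there is no in-paper proof to compare against. That said, your sketch is broadly aligned with the strategy actually used in the literature for the Schnorr-random version of the effective Birkhoff theorem: approximate $F$ by cylinder step functions, control the Birkhoff sums via a maximal ergodic inequality, and build a Schnorr test out of the failure sets, exploiting ergodicity so that the limit $I=\int F\,d\mu$ is a fixed computable real rather than an $X$-dependent quantity.

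Two points need sharpening. First, the maximal inequality you want is the Hopf/Wiener maximal ergodic inequality (or Garsia's version of it); ``Koopman--von Neumann'' usually refers to the density-one convergence criterion for bounded sequences, which is a different tool. Second, and more substantively, the truncation at $K_m$ is where the real work lives, and your sketch treats it as bookkeeping. If you set $\U_m$ to be a \emph{finite} union of cylinders $\{X: \sup_{N_m\le k\le K_m}|S_kF_{n(m)}(X)-\int F_{n(m)}|>\epsilon\}$, you have computable measure for free, but you lose the capture property: a divergent $X$ might only exhibit a deviation at some $k>K_m$, so $X\notin\U_m$ and the contrapositive collapses. The standard repair is to keep the untruncated open sets $A_{N,\epsilon}=\{X:\exists k\ge N,\ |S_kF(X)-I|>\epsilon\}$, whose measures are lower semicomputable, and then argue that these measures are in fact \emph{computable} by producing an effective upper bound on the tail: one applies the maximal inequality to $S_N F-I$ and uses an effective $L^1$ or $L^2$ mean ergodic theorem to show $\|S_NF-I\|_1\to 0$ with a computable modulus. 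It is precisely here that ergodicity is essential---in the non-ergodic case the limit function is not computable, the modulus fails to be effective, and the theorem is false for Schnorr (indeed even Martin-L\"of) randoms, as V'yugin's example shows. Your proposal correctly identifies computability of the test measures as the crux, but the mechanism you give for achieving it (finite truncation) would break capture; you need the effective-rate argument instead.
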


\begin{proof}[Proof of Theorem \ref{thm-nblock}]
Let $X\in\SR_\mu$.  Given $n\in\omega$, let $\mu$ be an $n$-step Bernoulli measure on $\cs$ and let $T$ be the $n$-shift. We define $F(X)=\dfrac{|\phi(X\uh n)|}{n}$. Then
\[
\int F d\mu=\sum_{\sigma\in 2^n}\mu(\sigma)\dfrac{|\phi(\sigma)|}{n}=\avg(\phi,\mu,n)=\rate(\phi,\mu),
\]
where the last equality holds by Theorem \ref{thm-rate}.  Next, for any $\mu$-Schnorr random sequence $X\in\cs$,
\[
\frac{1}{k}\sum_{i=0}^{k-1}F(T^i(X))=\frac{1}{k}\sum_{i=0}^{k-1}\dfrac{|\phi(T^i(X)\uh n)|}{n}=\frac{1}{nk}\sum_{i=0}^{k-1}\bigl|\phi\bigl(X\uh [ni,n(i+1)\bigr)\bigr|=\frac{|\phi(X\uh nk)|}{nk},
\]
where the last equality follows from the fact that $\phi$ is an $n$-block map.  Then
\begin{align*}
\begin{split}
\oi_\Phi(X)=\lim_{n\rightarrow\infty}\frac{|\phi(X\uh n)|}{n}=&
\lim_{k\rightarrow\infty}\frac{|\phi(X\uh nk)|}{nk}=\lim_{k\rightarrow\infty}\frac{1}{k}\sum_{i=0}^{k-1}F(T^i(X))\\
&=\int F d\mu=\rate(\phi,\mu)=\rate(\Phi,\mu),
\end{split}
\end{align*}
where the third equality follows from Theorem \ref{thm-ergodic1}, as the function $F$ is bounded.

\end{proof}

\section{The Rate of Functionals Induced by DDG-Trees}\label{sec-ddg}

The next example we consider is given in terms of DDG-trees (\emph{discrete distribution generating trees}), first introduced by Knuth and Yao in \cite{KnuYao76}.  A DDG-tree is a tree $S\subseteq\str$ with terminal nodes that can be used with unbiased random bits to induce a discrete probability distribution on a set $A=\{a_1,\cdots,a_k\}$.  The terminal nodes of $S$, the set of which is denoted by $D(S)$, are labelled with values from $A$.  We define a labelling function $\ell_S:D(S)\rightarrow A$ such that for all $\tau\in D(S)$, $\ell_S(\tau)\in A$ is the label assigned to $\tau$.  To ensure we have a probability distribution on $A$, the labels on $S$ must satisfy the following condition:  For $i=1,\dotsc, k$, if we set
\[
p_i=\sum_{\ell_S(\tau)=a_i}2^{-|\tau|},
\]
then 
\[
\sum_{i=1}^k p_i =1.
\]
The distribution $\{p_1,p_2,\dotsc,p_k\}$ on $A$ is induced by the following process:  
\begin{itemize}
\item For each branching node in the tree, we use the toss of an unbiased coin to determine which direction we will take. 
\item If we arrive at a terminal node $\tau$, the process outputs $\ell_S(\tau)$.
\end{itemize}
A DDG-tree $T$ defines a function from $\cs$ to $A$ as follows:  For $X\in\cs$, the output determined by $X$ is the unique element $a\in A$ such that for some $n\in\omega$, $X\uh n$ is a terminal node in $T$ labelled with $a$, if it exists; otherwise, the output is the empty string $\epsilon$.  That is, we look for the first $n$ such that $X\uh n\in D(S)$, and if such an $n$ exists, we output the value $\ell_S(X\uh n)$.

Knuth and Yao define the average running time of randomness extraction by a DDG-tree $S$ to be
\[
\mathit{AvgRT}(S)=\sum_{i\in\omega}i\cdot\lambda(\llb D(S)\cap 2^i\rrb).
\]
That is, $\mathit{AvgRT}(S)$ is the average number of input bits needed to produce a single output bit.

Hereafter we will restrict our attention to computable DDG-trees, where a DDG-tree $S$ is computable if the set $D(S)$ is a computable set and the function $\ell_S: D(S)\rightarrow A$ is computable  (which together imply that the values $p_1,\dotsc, p_k$ assigned to members of $A$ are computable).

We can use a computable DDG-tree $S$ to define a Turing functional as follows.  First, for every $\sigma\in D(S)$, we set $\phi_S(\sigma)=\ell_S(\sigma)$.   Then for any $\sigma\in\str$, if $\sigma$ does not extend any $\tau\in D(S)$, then we set $\phi_S(\sigma)=\epsilon$.  However, if $\sigma$ 
 extends some $\tau\in D(S)$, then we can write $\sigma={\sigma_1}^\frown\dotsc^\frown\sigma_k$, where $\sigma_1,\dotsc,\sigma_{k-1}\in D(S)$ and $\sigma_k\not\in D(S)$ (and is possibly empty).  Note that this decomposition is unique, as $D(S)$ is prefix-free.  Then we set
\[
\phi_S(\sigma)=\phi_S(\sigma_1)^\frown\dotsc^\frown\phi_S(\sigma_{k-1})^\frown\phi_S(\sigma_k)=\ell_S(\sigma_1)^\frown\dotsc^\frown\ell_S(\sigma_{k-1})^\frown\epsilon.
\]

We next extend $\phi_S$ to a Turing functional $\Phi_S:\cs\rightarrow A^\omega$.  For $X\in\cs$, we define a possibly finite sequence $n_0,n_1,\dotsc$ inductively as follows:
\begin{itemize}
\item $n_0$ is the unique $n$ such that $\ell_S(X\uh n)\in D(S)$, if it exists; otherwise $n_0$ is undefined.

\item Suppose $n_0,\dotsc, n_k$ have been defined.  Then $n_{k+1}$ is the unique $n$ such that $\ell_T(X\uh [n_k,n))\in D(S)$; otherwise $n_{k+1}$ is undefined.
\end{itemize}
Hereafter we will refer to the sequence of strings  $(X\uh [n_k,n_{k+1}))_{k\in\omega}$ as the \emph{$S$-blocks} of $X$.

If, for a given $X\in\cs$, the corresponding infinite sequence $(n_i)_{i\in\omega}$ is defined, then we set
\[
\Phi_S(X)=\ell_S(X\uh n_0)^\frown\ell_S(X\uh [n_0,n_1))^\frown\cdots^\frown\ell_S(X\uh[n_k,n_{k+1}))^\frown\cdots
\]
In the case that the corresponding sequence of block lengths is finite, then $\Phi_S(X)$ is undefined.

The issue of determining the canonical generator of a Turing functional defined in terms of a DDG-tree is a delicate one.  Knuth and Yao spend a considerable portion of their study \cite{KnuYao76} on the identification of the DDG-tree that most efficiently induces a distribution on a set $A$ (as well as more general distributions), where this efficiency is given in terms of extraction rate.  Hereafter, we will restrict our attention to DDG-trees $S$ that are minimal with respect to extraction rate, which amounts to assuming that the corresponding map $\phi_S$ on $\str$ is the canonical generator of the associated Turing functional $\Phi_S$.  Let us refer to such DDG-trees as \emph{minimal} DDG-trees.

\begin{prop}
If $S$ is a computable DDG-tree, then the Turing functional $\Phi_S$ is almost total.
\end{prop}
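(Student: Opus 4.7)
The plan is to show that the complement of $\dom(\Phi_S)$ has Lebesgue measure zero by writing $\dom(\Phi_S)$ as a nested intersection of sets each of which has measure $1$. The input to $\Phi_S$ on which the functional fails to halt is exactly the set of $X \in \cs$ for which the sequence of $S$-block endpoints $(n_k)_{k\in\omega}$ is finite; so the task reduces to bounding the measure of this set.

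The first observation I would record is the following Kraft-style identity: since $S$ is a DDG-tree inducing a genuine probability distribution on $A = \{a_1,\dotsc,a_k\}$, we have
\[
\sum_{\tau \in D(S)} 2^{-|\tau|} \;=\; \sum_{i=1}^k \sum_{\ell_S(\tau) = a_i} 2^{-|\tau|} \;=\; \sum_{i=1}^k p_i \;=\; 1.
\]
Because $D(S)$ consists of the terminal nodes of a tree, $D(S)$ is prefix-free, so the above sum equals $\lambda(\llb D(S)\rrb)$; in particular $\lambda(\llb D(S)\rrb) = 1$.

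Next, for each $k \in \omega$ I would define
\[
\mathcal{B}_k \;=\; \{X \in \cs : n_0(X), n_1(X), \dotsc, n_{k-1}(X) \text{ are all defined}\},
\]
so that $\mathcal{B}_0 = \cs$, the sequence is decreasing ($\mathcal{B}_0 \supseteq \mathcal{B}_1 \supseteq \cdots$), and $\dom(\Phi_S) = \bigcap_{k\in\omega} \mathcal{B}_k$. By the definition of the $S$-blocks and the uniqueness of the decomposition afforded by the prefix-freeness of $D(S)$, one can check that
\[
\mathcal{B}_k \;=\; \bigsqcup_{(\tau_1,\dotsc,\tau_k) \in D(S)^k} \llb \tau_1 \fr \tau_2 \fr \cdots \fr \tau_k \rrb,
\]
where the union is disjoint because the set of $k$-fold concatenations of strings from a prefix-free set is itself prefix-free. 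Computing the Lebesgue measure,
\[
\lambda(\mathcal{B}_k) \;=\; \sum_{(\tau_1,\dotsc,\tau_k)} 2^{-|\tau_1|}\cdots 2^{-|\tau_k|} \;=\; \Bigl(\sum_{\tau \in D(S)} 2^{-|\tau|}\Bigr)^{k} \;=\; 1.
\]

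Finally, continuity of measure from above gives $\lambda(\dom(\Phi_S)) = \lim_{k\to\infty} \lambda(\mathcal{B}_k) = 1$, which is exactly the statement that $\Phi_S$ is $\lambda$-almost total. There is no genuine obstacle here beyond the bookkeeping of the decomposition; the whole argument is powered by the Kraft-style equality $\sum_{\tau \in D(S)}2^{-|\tau|}=1$ built into the definition of a DDG-tree, together with prefix-freeness of $D(S)$. (Note that computability of $S$ is not actually needed for the almost-totality claim itself; it is invoked elsewhere to ensure that $\Phi_S$ is a Turing functional in the first place.)
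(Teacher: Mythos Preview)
Your proof is correct and rests on the same key fact the paper uses, namely the Kraft-style identity $\lambda(\llb D(S)\rrb)=\sum_{\tau\in D(S)}2^{-|\tau|}=1$ together with prefix-freeness of $D(S)$. The organization differs slightly: you build the domain as the nested intersection $\bigcap_k \mathcal{B}_k$ of the measure-one sets $\mathcal{B}_k=\llb D(S)^k\rrb$ and invoke continuity from above, whereas the paper works on the complement, defining the $\Pi^0_1$ null class $\mathcal{P}=\{X:(\forall n)\,\phi_S(X\uh n)=\epsilon\}=\cs\setminus\llb D(S)\rrb$ and writing $\cs\setminus\dom(\Phi_S)$ as the countable union $\bigcup_{\sigma\in D(S)^{<\omega}}\mathcal{P}_\sigma$ of translates of $\mathcal{P}$. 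These are dual presentations of the same computation; your version is arguably cleaner, while the paper's phrasing makes the $\Pi^0_1$ structure of the failure set explicit (which connects to the Kurtz-randomness characterization of almost totality used earlier in the paper). Your closing remark that computability of $S$ is only needed so that $\Phi_S$ is a genuine Turing functional is also on point.
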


\begin{proof}
First, observe that collection of cylinders determined by the elements of $D(S)$ yields a set of Lebesgue measure one.  Indeed, 
\[
\lambda(\llb D(S)\rrb)=\sum_{i=1}^k\sum_{\ell_S(\tau)=a_i}2^{-|\tau|}=\sum_{i=1}^kp_i=1.
\]
It then follows that the set $\mathcal{P}=\{X\in\cs:(\forall n)\,\phi_S(X\uh n)=\epsilon\}$ is a $\Pi^0_1$ class of Lebesgue measure zero.  As in the proof of Proposition \ref{prop-at}, if we set $\mathcal{P}_\sigma=\{\sigma^\frown X\colon X\in \mathcal{P}\}$, then we have  $\lambda(\P_\sigma)=\lambda(\sigma)\cdot\lambda(\P)=0$.  Then $\dom(\Phi_S)=\cs\setminus \bigcup_{\sigma\in (D(S))^{<\omega}}\P_\sigma$, and so we have $\lambda(\dom(\Phi_S))=1$.
\end{proof}

We would like to calculate the extraction rate for a Turing functional $\Phi_S$ induced by a minimal DDG-tree $S$.  To do so, we will first prove the following:

\begin{thm}\label{thm-DDG}
  Let $X\in\cs$ be Schnorr random.  Then for every computable, minimal DDG-tree $S$, we have
\[
\oi_{\Phi_S}(X)=\frac{1}{\mathit{AvgRT}(S)}.
\]
\end{thm}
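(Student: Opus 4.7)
I would apply an effective ergodic theorem to an induced dynamical system in which a single time step corresponds to consuming one complete $S$-block of input. Let $T$ denote the one-bit shift on $\cs$, and write $L(X)$ for the unique $n$ such that $X\uh n\in D(S)$, i.e., the length of the first $S$-block of $X$. Define the \emph{$S$-block shift} by $T_S(X)=T^{L(X)}(X)$. Since $S$ is computable, $L$ and $T_S$ are partial computable and defined on $\dom(\Phi_S)$, a Lebesgue conull set containing every Schnorr random sequence (since $\SR_\lambda\subseteq\KR_\lambda\subseteq\dom(\Phi_S)$ by almost totality of $\Phi_S$). A direct computation, using that $T_S$ restricted to $\llb\tau\rrb$ is a bijection onto $\cs$ multiplying measure by $2^{|\tau|}$ and summing over $\tau\in D(S)$, shows that $T_S$ preserves $\lambda$. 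For ergodicity, the block-decomposition map $X \mapsto (X\uh[N_k(X),N_{k+1}(X)))_{k\in\omega}$ provides a measure-theoretic isomorphism between $(\cs,\lambda,T_S)$ and the Bernoulli shift on $D(S)^\omega$ with product measure $\tau\mapsto 2^{-|\tau|}$, which is ergodic by an argument analogous to Lemma~\ref{lem-ergodic}.

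Setting $N_k(X)=\sum_{i=0}^{k-1}L(T_S^i X)$ (the endpoint of the $k$-th complete $S$-block in $X$, with $N_0=0$), the ratio $N_k/k$ is precisely the $k$-th Birkhoff average of $L$ along the $T_S$-orbit of $X$. Applying an effective Birkhoff theorem to $L$ at the Schnorr random point $X$ would yield
\[
\lim_{k\to\infty}\frac{N_k}{k}=\int L\,d\lambda=\sum_{\tau\in D(S)}|\tau|\cdot 2^{-|\tau|}=\mathit{AvgRT}(S),
\]
so $k/N_k\to 1/\mathit{AvgRT}(S)$. A squeeze argument then closes the proof: whenever $N_k\le n<N_{k+1}$, the functional produces exactly $k$ output symbols on $X\uh n$, so
\[
\frac{k}{N_{k+1}}\leq\frac{|\phi_S(X\uh n)|}{n}\leq\frac{k}{N_k},
\]
and both outer ratios converge to $1/\mathit{AvgRT}(S)$, yielding $\oi_{\Phi_S}(X)=1/\mathit{AvgRT}(S)$.

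\textbf{The main obstacle.} The function $L$ is unbounded whenever $D(S)$ is infinite, so Theorem~\ref{thm-ergodic1} does not apply to $L$ directly. I would handle this via a truncation argument: for each $M\in\omega$, the function $L^M=\min(L,M)$ is bounded and computable, and Theorem~\ref{thm-ergodic1} applied to $L^M$ yields $\tfrac{1}{k}\sum_{i=0}^{k-1}L^M(T_S^i X)\to\int L^M\,d\lambda$ at the Schnorr random point $X$. Since $L\geq L^M$, this gives $\liminf_k N_k/k\geq\int L^M\,d\lambda$, and letting $M\to\infty$ along with monotone convergence ($\int L^M\,d\lambda\nearrow\mathit{AvgRT}(S)$) yields the lower bound $\liminf_k N_k/k\geq\mathit{AvgRT}(S)$. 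For the matching upper bound on $\limsup_k N_k/k$ (relevant when $\mathit{AvgRT}(S)<\infty$), one must control the contribution of long blocks; this can be done either by a Schnorr-effective Markov-type inequality applied to the tails $L-L^M$, using $\int(L-L^M)\,d\lambda=\sum_{|\tau|>M}(|\tau|-M)\cdot 2^{-|\tau|}\to 0$, or by invoking an $L^1$ version of effective Birkhoff for computable $L^1$ observables. I expect this unbounded-observable issue to be the main technical hurdle; the computability verifications for $T_S$ and the squeeze argument are routine once it is resolved.
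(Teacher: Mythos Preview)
Your overall strategy is exactly the paper's: introduce the tree-shift $T_S$, show it is $\lambda$-preserving and ergodic, recognize that the Birkhoff averages of the first-block-length function $L$ are $N_k/k$, conclude $N_k/k\to\mathit{AvgRT}(S)$, and finish with the same squeeze argument. The paper proves ergodicity by direct computation via Lemma~\ref{lem-ergodic} rather than your Bernoulli-isomorphism argument, but this is a minor stylistic difference.

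The substantive divergence is in how the ``main obstacle'' is resolved. The paper does not truncate; instead it invokes precisely the alternative you list at the end, an $L^1$ effective Birkhoff theorem due to G\'acs--Hoyrup--Rojas (stated in the paper as Theorem~\ref{thm-birk2}), which applies to a.e.\ computable ergodic transformations and a.e.\ computable, \emph{effectively integrable} observables. The work then reduces to checking that $L$ is effectively integrable, which the paper does by exhibiting the step functions $s_n(X)=\min(L(X),n)$ and computing $\int|s_n-s_{n-1}|\,d\lambda=\lambda(\cs\setminus\llb D(S)\uh(n-1)\rrb)$; since $\lambda(\llb D(S)\rrb)=1$ and $D(S)$ is computable, a suitable subsequence witnesses $L^1$-computability. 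This gives both inequalities at once for every Schnorr random $X$.

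Your truncation route yields the lower bound $\liminf_k N_k/k\ge\mathit{AvgRT}(S)$ cleanly, but your sketch for the upper bound (``Schnorr-effective Markov-type inequality'') is where the real content lies and is not obviously lighter than verifying effective integrability. There is also a second hypothesis mismatch you did not flag: Theorem~\ref{thm-ergodic1} as stated requires a \emph{total} computable transformation and observable, whereas $T_S$ and $L$ are only a.e.\ computable (undefined off $\llb D(S)\rrb$). The G\'acs--Hoyrup--Rojas version is formulated for a.e.\ computable data and handles this simultaneously with the unboundedness, which is why the paper reaches for it rather than Franklin--Towsner.
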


To prove Theorem \ref{thm-DDG}, we would like to mimic the proof of Theorem \ref{thm-nblock}.  In particular, we need to find an appropriate effective version of Birkhoff's ergodic theorem to derive the result.  However, to do so, we need to define the appropriate measure-preserving transformation.   

\begin{defn}
Let $S\subseteq\str$ be a tree with $\lambda(\llb D(S)\rrb)=1$.  The \emph{tree-shift} $T_S:\cs\rightarrow\cs$ is defined by setting $T_S(X)= Y$, where $X=\sigma^\frown Y$ and $\sigma\in D(S)$.  Moreover, in the case that $X\uh n\notin D(S)$ for all $n\in\omega$, $T_S(X)$ is undefined.  
\end{defn}

Note that if $S$ is a computable DDG-tree, then the associated tree-shift $T_S$ is computable by an almost total Turing functional, as $T_S$ is defined on $\llb D(S)\rrb$.

\begin{lem}
If $S$ is a tree with $\lambda(\llb D(S)\rrb)=1$, then the tree-shift $T_S$ is $\lambda$-invariant and ergodic.
\end{lem}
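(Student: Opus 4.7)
My plan is to leverage two properties of $D(S)$: it is prefix-free and has $\lambda(\llb D(S)\rrb)=1$. Together these yield $\lambda$-invariance by a one-line computation, and for ergodicity I would apply Lemma \ref{lem-ergodic} after computing $\lambda(T_S^{-i}(\llb\sigma\rrb)\cap\llb\tau\rrb)$ directly and showing it equals $\lambda(\sigma)\lambda(\tau)$ for all $i$ past a threshold depending only on $|\tau|$, which trivializes the Ces\`aro limit.

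For $\lambda$-invariance, prefix-freeness produces the disjoint decomposition $T_S^{-1}(\llb\tau\rrb)=\bigsqcup_{\sigma\in D(S)}\llb\sigma^\frown\tau\rrb$, so
\[
\lambda(T_S^{-1}(\llb\tau\rrb))=\sum_{\sigma\in D(S)}2^{-|\sigma|}\cdot 2^{-|\tau|}=\lambda(\llb D(S)\rrb)\cdot\lambda(\tau)=\lambda(\tau).
\]
An induction on $i$ then gives $\lambda(\{X:T_S^i(X)\text{ is defined}\})=1$ for every $i$, and iterating the decomposition yields $T_S^{-i}(\llb\sigma\rrb)=\bigsqcup_{\rho\in D(S)^i}\llb\rho^\frown\sigma\rrb$, where $D(S)^i$ denotes the set of $i$-fold concatenations of elements of $D(S)$.

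For ergodicity, setting aside the degenerate case $D(S)=\{\epsilon\}$ (in which the tree-shift is the identity and the statement fails), every element of $D(S)$ has length at least $1$, so every $\rho\in D(S)^i$ satisfies $|\rho|\geq i$. Thus, for $i\geq|\tau|$, each intersection $\llb\rho^\frown\sigma\rrb\cap\llb\tau\rrb$ equals $\llb\rho^\frown\sigma\rrb$ when $\tau\prec\rho$ and is empty otherwise, and summing produces
\[
\lambda(T_S^{-i}(\llb\sigma\rrb)\cap\llb\tau\rrb)=\lambda(\sigma)\sum_{\rho\in D(S)^i,\,\tau\prec\rho}2^{-|\rho|}=\lambda(\sigma)\lambda(\tau)
\]
for all $i\geq|\tau|$, whence the hypothesis of Lemma \ref{lem-ergodic} follows immediately. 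The main bookkeeping step is the identity $\sum_{\rho\in D(S)^i,\,\tau\prec\rho}2^{-|\rho|}=\lambda(\tau)$ for $i\geq|\tau|$, which reduces to observing that $\lambda$-almost every $X\in\llb\tau\rrb$ has a full block decomposition, and for such $X$ the concatenation of its first $i$ blocks is a prefix of $X$ of length at least $|\tau|$, hence extends $\tau$, giving a measure-preserving bijection between $\llb\tau\rrb$ (modulo a null set) and $\bigsqcup_{\rho\in D(S)^i,\,\tau\prec\rho}\llb\rho\rrb$.
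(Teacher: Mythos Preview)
Your proof is correct and uses the same overall strategy as the paper---verify $\lambda$-invariance from the decomposition $T_S^{-1}(\llb\tau\rrb)=\bigsqcup_{\rho\in D(S)}\llb\rho\tau\rrb$, then invoke Lemma~\ref{lem-ergodic} after computing $\lambda(T_S^{-i}(\llb\sigma\rrb)\cap\llb\tau\rrb)$---but your execution is tighter in two places. First, you isolate the degenerate case $D(S)=\{\epsilon\}$, where $T_S$ is the identity and ergodicity genuinely fails; the paper's statement does not exclude this. Second, the paper asserts $\lambda(T_S^{-n}(\llb\sigma\rrb)\cap\llb\tau\rrb)=\lambda(\sigma)\lambda(\tau)$ for \emph{every} $n\geq 1$ via induction, with a base case that covers $\llb\tau\rrb$ (up to a null set) by cylinders $\llb\tau_i\rrb$ with $\tau_i\in D(S)$ extending $\tau$; such $\tau_i$ need not exist once $\tau$ properly extends a leaf of $S$ (for instance $D(S)=\{0,10,11\}$, $\sigma=0$, $\tau=01$ gives $\lambda(T_S^{-1}(\llb 0\rrb)\cap\llb 01\rrb)=0\neq\lambda(0)\lambda(01)$). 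You sidestep this by iterating to $T_S^{-i}(\llb\sigma\rrb)=\bigsqcup_{\rho\in D(S)^i}\llb\rho\sigma\rrb$ and claiming the product identity only for $i\geq|\tau|$, which is all the Ces\`aro limit needs and which your block-decomposition argument cleanly justifies. One nit: your case split should read $\tau\preceq\rho$ rather than $\tau\prec\rho$, though replacing $i\geq|\tau|$ by $i>|\tau|$ makes the distinction moot.
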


\begin{proof}

First, we show $\lambda$-invariance.  For $\tau\in\str$, we have 
\[
T_S^{-1}(\llb\tau\rrb)=\bigcup\llb\{\rho\tau\colon \rho\in D(S)\}\rrb.  
\]
Then
\[
\lambda(T_S^{-1}(\llb\tau\rrb))=\sum_{\sigma\in D(S)}\lambda(\sigma\tau)=\sum_{\sigma\in D(S)}\lambda(\sigma)\lambda(\tau)=\lambda(\tau)\sum_{\sigma\in D(S)}\lambda(\sigma)=\lambda(\tau).
\]

Next, we prove that $T_S$ is ergodic.  Towards this end, we claim that for every $\sigma,\tau\in\str$ and $n\in\omega$, $\lambda\bigl(T_S^{-n}(\llb\sigma\rrb)\cap\llb\tau\rrb\bigr)=\lambda(\sigma)\lambda(\tau)$.  We show this by induction on $n$. For the case in which $n=1$, given $\sigma,\tau\in\str$, there is a prefix-free set $\{\tau_i\}_{i\in\omega}\subseteq D(S)$  such that $\bigcup_{i\in\omega}\llb\tau_i\rrb\subseteq\llb\tau\rrb$ and
\[
\lambda(\tau)=\sum_{i\in\omega}\lambda(\tau_i);
\]
that is, $\llb\tau\rrb=\bigcup_{i\in\omega}\llb\tau_i\rrb$ up to a set of $\lambda$-measure zero.  Then 
\[
T_S^{-1}(\llb\sigma\rrb)\cap\llb\tau\rrb=\bigcup_{i\in\omega}\llb\tau_i\sigma\rrb
\]
and hence
\[
\lambda\bigl(T_S^{-1}(\llb\sigma\rrb)\cap\llb\tau\rrb\bigr)=\sum_{i\in\omega}\lambda(\tau_i\sigma)=\lambda(\sigma)\sum_{i\in\omega}\lambda(\tau_i)=\lambda(\sigma)\lambda(\tau).
\]
Next, suppose that $\lambda\bigl(T_S^{-n}(\llb\sigma\rrb)\cap\llb\tau\rrb\bigr)=\lambda(\sigma)\lambda(\tau)$.  Then 
\[
T_S^{-(n+1)}(\llb\sigma\rrb)=T_S^{-n}(T_S^{-1}(\llb\sigma\rrb))=T_S^{-n}\left(\bigcup\llb\{\rho\sigma\colon \rho\in D(S)\}\rrb\right)=\bigcup_{\rho\in D(S)}T_S^{-n}(\llb\rho\sigma\rrb).
\]
Then
\begin{align*}
\lambda\left(T_S^{-(n+1)}(\llb\sigma\rrb)\cap\llb\tau\rrb\right)&=\sum_{\rho\in D(S)}\lambda\left(T_S^{-n}(\llb\rho\sigma\rrb)\cap\llb\tau\rrb\right)\\
&=\sum_{\rho\in D(S)}\lambda(\rho\sigma)\lambda(\tau)
=\lambda(\sigma)\lambda(\tau)\sum_{\rho\in D(S)}\lambda(\rho)=\lambda(\sigma)\lambda(\tau),
\end{align*}
where the second equality follows from the inductive hypothesis.  It follows that
\[
\lim_{n\rightarrow\infty}\frac{1}{n}\sum_{i=0}^{n-1}\lambda(T_S^{-i}\llb\sigma\rrb\cap\llb\tau\rrb)=\lambda(\sigma)\lambda(\tau),
\]
and thus, by Lemma \ref{lem-ergodic}, $T_S$ is ergodic.
\end{proof}

The effective version of the ergodic theorem that we will use in the proof of Theorem \ref{thm-DDG} requires us to introduce some additional notions.  First, a function is \emph{a.e.\ computable} if it is computable on a $\Pi^0_2$ set of Lebesgue measure 1.   As noted above,  $T_S$ is computable on $\llb D(S)\rrb$, which is a $\Sigma^0_1$ class of measure 1, and so it is a.e.\ computable. 

Next, a function $F:\cs\rightarrow \mathbb{R}$ is \emph{effective integrable} (also \emph{$L^1$-computable}) if there is a computable sequence of rational step functions $(s_n)_{n\in\omega}$ such that $F(X)=\lim_{n\rightarrow\infty} s_n(X)$ (whenever $F(X)\halts$) and for all $n\in\omega$, $\int|s_n-s_{n-1}|d\lambda\leq 2^{-n}$; see, e.g. \cite{Rut20} or \cite{Miy13}.

We now can formulate the relevant effective version of Birkhoff's ergodic theorem, due to G\'{a}cs, Hoyrup, and Rojas \cite{GacHoyRoj11}  (as observed by Rute \cite{Rut20}, G\'{a}cs, Hoyrup, and Rojas prove a slightly different result, but the proof of their result establishes the following).

\begin{thm}[Effective Birkhoff's Ergodic Theorem, version 2 \cite{GacHoyRoj11} ]\label{thm-birk2}
Let $\mu$ be a computable measure on $\cs$ and let $T:\cs\rightarrow\cs$ be an a.e.\ computable, $\mu$-invariant, ergodic transformation.  Then for any a.e.\ computable function $F$ that is effectively integrable and any Schnorr random $X\in\cs$,
\[
\lim_{k\rightarrow\infty}\frac{1}{k}\sum_{i=0}^{k-1}F(T^i(X))=\int F d\mu.
\]
\end{thm}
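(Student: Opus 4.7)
The plan is to effectivize the classical proof of Birkhoff's ergodic theorem. Writing $A_k F(X) := \tfrac{1}{k}\sum_{i<k} F(T^i X)$, I want to exhibit a Schnorr test $(\mathcal{U}_n)_{n\in\omega}$ that covers the set
\[
\mathcal{F} := \{X \in \dom(T^\infty) : A_k F(X) \not\to \textstyle\int F\, d\mu\},
\]
where $\dom(T^\infty)$ is the $\Pi^0_2$ full-measure set on which every iterate $T^i X$ is defined. The complement of $\dom(T^\infty)$ is Schnorr null, since the a.e.\ computability of $T$ places the failure of a single iterate inside a $\Pi^0_1$ class of measure zero with computable complementary measure, and Schnorr randoms avoid every such class. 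Producing a Schnorr test for $\mathcal{F}$ therefore suffices.

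My first reduction is to the case where $F$ is a rational step function constant on cylinders of some fixed depth. By effective integrability, I fix a computable sequence $(s_m)_{m\in\omega}$ of such step functions with $\int |F - s_m|\, d\mu \leq 2^{-m}$. I then apply an effective form of the maximal ergodic inequality to bound
\[
\mu\bigl\{X : \sup_k A_k |F - s_m|(X) > 2^{-m/2}\bigr\} \leq 2^{-m/2},
\]
noting that the set on the left is effectively open, and that its measure is bounded above by a computable quantity (because $\int |F - s_m|\, d\mu$ is computable and the maximal inequality transfers this bound). Unioning over $m \geq n$ gives one half of the desired Schnorr test, forcing $|A_k F(X) - A_k s_m(X)|$ to be controlled by $2^{-m/2}$ uniformly in $k$ at every Schnorr random $X$ once $m$ is large.

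For the step function $s_m$, the averages $A_k s_m$ are a.e.\ computable, and the bad sets $\mathcal{V}_{k,m,\epsilon} := \{X : |A_k s_m(X) - \int s_m\, d\mu| > \epsilon\}$ are effectively open with uniformly computable measure in $k$, $m$, and rational $\epsilon$. Classical Birkhoff guarantees $\mu(\mathcal{V}_{k,m,\epsilon}) \to 0$ as $k\to\infty$, and since the measure is computable I can effectively search for the least $K(n,m,\epsilon)$ such that $\mu\bigl(\bigcup_{k\geq K} \mathcal{V}_{k,m,\epsilon}\bigr) \leq 2^{-n}$, yielding a Schnorr test for the failure of ergodic convergence of the step function $s_m$. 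Assembling the pieces via the triangle inequality
\[
|A_k F - \textstyle\int F| \leq |A_k F - A_k s_m| + |A_k s_m - \int s_m| + |\int s_m - \int F|,
\]
and diagonalizing over $n$, $m$, and $\epsilon = 2^{-\ell}$, produces a single Schnorr test covering $\mathcal{F}$, and the theorem follows.

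The main obstacle I anticipate is twofold. First, the effective maximal ergodic inequality, while classical in flavor, requires a careful computable proof (for instance via a Kingman-style combinatorial lemma applied to finite-depth approximations of $T$) that yields not merely a classical bound but a \emph{computable} upper bound on the measure of the failure set, together with verification that this set is $\Sigma^0_1$. Second, the a.e.\ (rather than total) computability of $T$ means that each "bad set" in the construction is only $\Sigma^0_1$ modulo a Schnorr null set, and the passage between $A_k F$ and $A_k s_m$ must be tracked through these exceptional sets without spoiling the computable measure bounds. Once these technical points are secured, the diagonal assembly of the Schnorr test is routine.
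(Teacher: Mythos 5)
The paper does not give its own proof of this theorem: it cites G\'acs--Hoyrup--Rojas \cite{GacHoyRoj11} (as reformulated by Rute \cite{Rut20}) and uses it as a black box. Your proposal is an attempt to prove the theorem from scratch, and it contains a genuine gap at its central step.

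The problematic step is your claim that, since $\mu(\mathcal{V}_{k,m,\epsilon})$ is computable, you can ``effectively search for the least $K(n,m,\epsilon)$ such that $\mu\bigl(\bigcup_{k\geq K}\mathcal{V}_{k,m,\epsilon}\bigr)\leq 2^{-n}$.'' This is exactly the part of effective ergodic theory that cannot be done na\"ively. The tail union $\bigcup_{k\geq K}\mathcal{V}_{k,m,\epsilon}$ is an increasing union of effectively open sets, so its measure is only \emph{lower}-semicomputable; to verify that this measure is $\leq 2^{-n}$ you would also need it to be upper-semicomputable, and nothing in your setup supplies that. Moreover, V'yugin showed that there are computable ergodic systems and computable observables for which the Birkhoff averages converge with \emph{no} computable modulus, so no effective search of this kind can succeed in general. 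Classical Birkhoff gives you convergence, but not a computable rate, and you have quietly used the latter. The same issue, in a milder form, afflicts your maximal-inequality step: the set $\{X : \sup_k A_k |F - s_m|(X) > 2^{-m/2}\}$ is effectively open and has measure bounded by a computable quantity, but that does not make its measure computable, which is what a Schnorr test requires.

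To close the gap one needs a genuinely different mechanism. The standard route (used in the proofs your proposal is implicitly effectivizing) combines the effective maximal ergodic inequality with a coboundary decomposition coming from the mean ergodic theorem: one approximates the centered observable $s_m - \int s_m\,d\mu$ in $L^2$ by a coboundary $h - h\circ T$ plus a small remainder, so that the Birkhoff averages of the coboundary part telescope with an explicit $O(1/k)$ rate (hence computable control), while the remainder is handled by the maximal inequality. This yields bad sets whose measures are \emph{computable}, not merely computably bounded, which is what distinguishes a Schnorr test from an arbitrary small $\Sigma^0_1$ cover. Without that (or some substitute, such as a layerwise-computability argument), your assembly does not produce a Schnorr test, and the ``routine diagonalization'' in your last paragraph rests on a step that is not effective.
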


\begin{proof}[Proof of Theorem \ref{thm-DDG}]
Let $X\in\cs$ be Schnorr random.  We define $F:\cs\rightarrow\omega$ so that $F(X)$ is the unique $n$ such that $X\uh n\in D(S)$; that is, $F$ counts the number of input bits of a given sequence $X$ needed to generate one bit of output using the DDG-tree $S$.  Clearly $F$ is also computable on $\llb D(S)\rrb$ and is thus a.e.\ computable.

To see that $f$ is effectively integrable, we define a sequence $s_n$ of rational step functions on $\cs$ as follows:
\[
s_n(X)=
\left\{
	\begin{array}{ll}
		 F(X) & \mbox{if } (\exists k\leq n) F(X)\halts=k \\
		n & \mbox{otherwise }
	\end{array}.
\right.
\]
Observe that $s_{n+1}(X)\leq s_n(X)+1$, and $s_n(X)=s_{n+1}(X)$ if and only if there is some $k\leq n$ such that $X\uh k\in D(S)$.  For $n\in\omega$, let us set $D(S)\uh n=\{\sigma\in D(S)\colon |\sigma|\leq n\}$.
\begin{align*}
\int |s_n-s_{n-1}|d\lambda&=\int s_n-s_{n-1} \,d\lambda\\
&=1\cdot\lambda\bigl(\cs\setminus \llb D(S)\uh (n-1)\rrb\}\bigr)+ 0\cdot\lambda\bigl( \llb D(S)\uh (n-1)\rrb\}\bigr)\\
&=\lambda\bigl(\cs\setminus \llb D(S)\uh (n-1)\rrb\}\bigr).
\end{align*}

Since $\lambda(\llb D(S)\rrb)=1$ and $D(S)$ is a computable set, the sequence  $\left(\lambda\bigl(\cs\setminus \llb D(S)\uh n\rrb\bigr)\right)_{n\in\omega}$ is uniformly computable and converges to 0.  Thus by choosing an appropriate subsequence of the functions $(s_n)_{n\in\omega}$, it follows that $F$ is effectively integrable.

Next, observe that
\begin{align}\label{eqn1}
\begin{split}
\int F d\lambda=\sum_{\sigma\in D(S)}\int_{\llb\sigma\rrb}F d\lambda&=\sum_{i\in\omega}i\cdot2^{-i}\cdot\bigl|\{\sigma\colon \sigma \in D(S)\cap 2^i\}\bigr|\\
&=\sum_{i\in\omega}i\cdot\lambda(\llb D(S)\cap 2^i\rrb)\\
&=\mathit{AvgRT}(S).
\end{split}
\end{align}

Given a Schnorr random $X\in\cs$, if we repeatedly apply the tree-shift $T_S$ to $X$ followed by the function $F$, we have
\begin{equation}\label{eqn2}
\sum_{i=0}^{k-1} F(T^i_S(X))=n_0+\sum_{i=0}^{k-1}\bigl|[n_i,n_{i+1})\bigr|=n_k
\end{equation}
where $(n_i)_{i\in\omega}$ is the sequence determined by the $S$-blocks of $X$.  Then it follows from (\ref{eqn2}) that
\begin{equation}\label{eqn3}
\lim_{k\rightarrow\infty}\frac{1}{k}\sum_{i=0}^{k-1} F(T^i_S(X))=\lim_{k\rightarrow\infty}\dfrac{n_k}{k}.
\end{equation}
Thus, 
\[
\lim_{k\rightarrow\infty}\dfrac{n_k}{k}=\lim_{k\rightarrow\infty}\frac{1}{k}\sum_{i=0}^{k-1} F(T^i_S(X))=\int F d\lambda=\mathit{AvgRT}(S),
\]
where the first equality is (\ref{eqn3}), the second equality follows from Theorem \ref{thm-birk2}, and the third equality comes from (\ref{eqn1}).

Lastly, we consider the values
\[
\dfrac{|\phi_S(X\uh n)|}{n}
\]
for $n\in\omega$.  Fix $n\in\omega$, if $(n_i)_{i\in\omega}$ is the sequence determined by the $S$-blocks of $X$, then for the maximum value $k$ such that $n_k\leq n$, 
\[
\dfrac{|\phi_S(X\uh n)|}{n}=\dfrac{k}{n}.
\]
Then
\[
\dfrac{k}{n_{k+1}}<\dfrac{|\phi_S(X\uh n)|}{n}\leq\dfrac{k}{n_k}.
\]
Since 
\[
\dfrac{k}{n_{k+1}}=\dfrac{k+1}{n_{k+1}}-\dfrac{1}{n_{k+1}},
\]
we have
\[
\lim_{k\rightarrow\infty}\left(\dfrac{k+1}{n_{k+1}}-\dfrac{1}{n_{k+1}}\right)<\lim_{n\rightarrow\infty}\dfrac{|\phi_S(X\uh n)|}{n}\leq\lim_{k\rightarrow\infty}\dfrac{k}{n_k}.
\]
It thus follows that
\[
\oi_{\Phi_S}(X)=\lim_{n\rightarrow\infty}\dfrac{|\phi_S(X\uh n)|}{n}=\lim_{k\rightarrow\infty}\dfrac{k}{n_k}=\dfrac{1}{AvgRT(S)}.
\]
\end{proof}

\begin{cor}\label{cor-rate}
$\rate(\Phi_S,\lambda)=\dfrac{1}{\mathit{AvgRT(S)}}$.
\end{cor}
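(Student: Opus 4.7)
The plan is to derive Corollary \ref{cor-rate} as a direct consequence of Theorem \ref{thm-DDG} together with the dominated-convergence style result recorded in Lemma \ref{lem-bct}. Theorem \ref{thm-DDG} already supplies the pointwise conclusion $\lim_{n\to\infty}\frac{|\phi_S(X\uh n)|}{n}=\frac{1}{\mathit{AvgRT}(S)}$ for every Schnorr random $X$; since the set of Schnorr random sequences has full Lebesgue measure, the same limit holds for $\lambda$-almost every $X$. So the only remaining task is to check that $\phi_S$ satisfies the uniform boundedness hypothesis of Lemma \ref{lem-bct} and then to pass from the $\rate$ of the generator to the $\rate$ of the functional.

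For boundedness, I would take $c=1$ and argue as follows. Every terminal node of $S$ has length at least $1$, so each $S$-block consumes at least one input bit while contributing at most one label to the output. Decomposing an arbitrary $\sigma\in\str$ into maximal $S$-blocks $\sigma={\sigma_1}^\frown\cdots^\frown{\sigma_{k-1}}^\frown\sigma_k$ with $\sigma_1,\dots,\sigma_{k-1}\in D(S)$ and $\sigma_k$ a possibly empty non-terminal tail, one has $|\phi_S(\sigma)|=k-1\le |\sigma|$. Thus the required inequality $|\phi_S(\sigma)|\le c|\sigma|$ holds with $c=1$.

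With both hypotheses of Lemma \ref{lem-bct} verified, the lemma yields
\[
\rate(\phi_S,\lambda)=\frac{1}{\mathit{AvgRT}(S)}.
\]
Finally, since $S$ is assumed to be a minimal DDG-tree, $\phi_S$ is the canonical generator of the functional $\Phi_S$, so by the definition of $\rate(\Phi_S,\lambda)$ we conclude $\rate(\Phi_S,\lambda)=\rate(\phi_S,\lambda)=\frac{1}{\mathit{AvgRT}(S)}$, as required.

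Since all the heavy lifting has already been done in the proof of Theorem \ref{thm-DDG} (effective ergodic theorem, effective integrability of $F$, and the identification of $\int F\,d\lambda$ with $\mathit{AvgRT}(S)$), I do not anticipate a genuine obstacle here; the only point that requires a brief verification is the uniform bound $|\phi_S(\sigma)|\le|\sigma|$, and this follows immediately from the prefix-freeness and non-emptiness of $D(S)$.
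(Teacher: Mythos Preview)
Your proposal is correct and follows essentially the same route as the paper: verify the uniform bound $|\phi_S(\sigma)|\le|\sigma|$, invoke Theorem \ref{thm-DDG} for the almost-everywhere pointwise limit, and then apply Lemma \ref{lem-bct}. You spell out the block-decomposition justification for the bound and the passage from $\rate(\phi_S,\lambda)$ to $\rate(\Phi_S,\lambda)$ via minimality, which the paper leaves implicit, but the argument is otherwise identical.
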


\begin{proof}
We apply Lemma \ref{lem-bct}.  First, we have $\dfrac{|\phi_S(\sigma)|}{|\sigma|}\leq 1$ for every $\sigma\in\str$.  By Theorem \ref{thm-DDG}, 
\[
\lim_{n\rightarrow\infty}\dfrac{|\phi_S(X\uh n)|}{n}=\oi_{\Phi_S}(X)=\dfrac{1}{\mathit{AvgRT(S)}}
\]
for every Schnorr random sequence.  The conclusion immediately follows from Lemma \ref{lem-bct}.
\end{proof}

\section{The Extraction Rate of the Levin-Kautz Conversion Procedure}\label{sec-lk}

We now calculate the extraction rate of a general procedure due independently to Levin \cite{LevZvo70}, Kautz \cite{Kau91}, Schnorr and Fuchs in \cite{SchFuc77}, and Knuth and Yao \cite{KnuYao76}.  In addition, this procedure has been studied in the randomness extraction literature under the label of the \emph{interval algorithm} (see, for instance, \cite{HanHos97}) and is the main idea behind the data compression technique known as \emph{arithmetic coding} (see \cite[Chapter 4]{Say17}).

 Following \cite{BieMon12}, we will refer to this procedure as the \emph{Levin-Kautz conversion procedure}.  Here we prioritize Levin and Kautz, as they both used this procedure to study the conversion of Martin-L\"of random sequences with respect to one measure into Martin-L\"of random sequences with respect to another measure.  In particular, Levin and Kautz use the procedure to prove the following:

\begin{thm}[Levin \cite{LevZvo70}, Kautz \cite{Kau91}] \label{thm-lk}
For every computable $\mu,\nu\in\P(\cs)$, if $X\in\MLR_\mu$ and $X$ is not computable (and in particular, $\mu(\{X\})=0$), then there is some $Y\in\MLR_\nu$ such that $X\equiv_T Y$.
\end{thm}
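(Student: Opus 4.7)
\medskip

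The plan is to establish the Levin-Kautz procedure as a Turing functional $\Phi_{\mu\to\nu}$ defined by arithmetic coding, show that it is measure-preserving in the sense that $\mu\circ\Phi_{\mu\to\nu}^{-1}=\nu$ on sufficiently nice points, deduce that it preserves Martin-L\"of randomness by a standard pullback-of-tests argument, and finally construct the inverse functional $\Phi_{\nu\to\mu}$ to obtain Turing equivalence. Concretely, for a computable measure $\rho$ on $\cs$, equip $\cs$ with the lexicographic order and, for each $\sigma\in\str$, let $I_\rho(\sigma)=[a_\rho(\sigma),a_\rho(\sigma)+\rho(\sigma))\subseteq[0,1]$ where $a_\rho(\sigma)=\rho(\{Z:Z<_{\text{lex}}\sigma 0^\omega\})$. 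These half-open intervals satisfy $I_\rho(\sigma 0)\cup I_\rho(\sigma 1)=I_\rho(\sigma)$ and have length $\rho(\sigma)$. Define the generator $\phi(\sigma)$ to be the longest $\tau$ with $I_\nu(\tau)\supseteq I_\mu(\sigma)$, and let $\Phi_{\mu\to\nu}$ be the induced Turing functional; since $\mu$ and $\nu$ are computable, $\phi$ is computable.

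Next, I would verify that $\Phi_{\mu\to\nu}$ is defined on every $X$ with $\mu(\{X\})=0$. The hypothesis on $X$ ensures this: the nested intervals $I_\mu(X\uh n)$ shrink to the single point $r:=a_\mu(X)+\mu(\{X\})\cdot 0=a_\mu(X)$, and because $r$ is not an endpoint of any $I_\nu(\tau)$ unless $r$ is a $\nu$-atom boundary (a measure-zero situation in the $\mu$-distribution of $X$), the images $\phi(X\uh n)$ must have lengths tending to infinity. A short calculation gives the key measure-preservation identity: for any $\tau\in\str$,
\[
\mu(\Phi_{\mu\to\nu}^{-1}(\llb\tau\rrb))=\mu(\{X:r(X)\in I_\nu(\tau)\})=|I_\nu(\tau)|=\nu(\tau),
\]
so $\Phi_{\mu\to\nu}$ pushes $\mu$ forward to $\nu$.

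The preservation of Martin-L\"of randomness follows from this identity by the usual pullback argument. Suppose $(\V_i)_{i\in\omega}$ is a $\nu$-Martin-L\"of test. Because $\Phi_{\mu\to\nu}$ is computable, the sets $\U_i:=\Phi_{\mu\to\nu}^{-1}(\V_i)$ are uniformly effectively open, and by the measure-preservation identity above, $\mu(\U_i)\leq\nu(\V_i)\leq 2^{-i}$, so $(\U_i)$ is a $\mu$-Martin-L\"of test. If $X\in\MLR_\mu$, then $X\notin\bigcap_i\U_i$, hence $Y:=\Phi_{\mu\to\nu}(X)\notin\bigcap_i\V_i$; since $(\V_i)$ was arbitrary, $Y\in\MLR_\nu$.

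Finally, to obtain $X\equiv_T Y$, I would carry out the symmetric construction to define $\Phi_{\nu\to\mu}$ with generator $\psi(\tau)$ equal to the longest $\sigma$ with $I_\mu(\sigma)\supseteq I_\nu(\tau)$, and check that $\Phi_{\nu\to\mu}(Y)=X$ whenever $\mu(\{X\})=0$ and $\nu(\{Y\})=0$. The first condition is given; the second follows from $Y\in\MLR_\nu$ together with the fact that atoms of a computable measure can only occur at computable points (so if $\nu(\{Y\})>0$ then $Y$ is computable, and then $X\leq_T Y$ would be computable, contradicting the hypothesis). Since $Y\leq_T X$ via $\Phi_{\mu\to\nu}$ and $X\leq_T Y$ via $\Phi_{\nu\to\mu}$, we obtain $X\equiv_T Y$ with $Y\in\MLR_\nu$. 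The principal obstacle is the careful handling of the endpoint and atom bookkeeping: one must rule out the degenerate possibility that $r(X)$ lands on the boundary between two $I_\nu$-intervals (which would leave $\phi$ with bounded output), and the cleanest way to do this is to show that the set of such pathological $X$ is a $\mu$-null $\Pi^0_2$ class, hence avoided by every $\mu$-Kurtz random and, in particular, by every non-computable $\mu$-Martin-L\"of random.
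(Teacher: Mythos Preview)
Your construction is exactly the interval/arithmetic-coding procedure that the paper sketches immediately after stating the theorem: define $(\sigma)_\mu$ and $(\sigma)_\nu$, put $(\sigma,\tau)$ into the functional when $(\sigma)_\mu\subseteq(\tau)_\nu$, and use the symmetric functional for the inverse. The paper does not give a self-contained proof; it asserts that ``one can verify'' almost-totality, measure-preservation, and the composition identities $(\Phi_{\nu\to\mu}\circ\Phi_{\mu\to\nu})(X)=X$, citing Levin and Kautz for the details. So at the level of strategy you match the paper precisely; you simply try to fill in more than the paper does.

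Two points where your added detail needs repair. First, the pathological set (where $r(X)$ hits a $\nu$-interval endpoint) is $\Sigma^0_2$, not $\Pi^0_2$: it is the complement of $\dom(\Phi_{\mu\to\nu})$, which is $\Pi^0_2$. With the correct complexity your conclusion stands, since $\mu$-Kurtz randoms avoid $\Sigma^0_2$ $\mu$-null sets (this is exactly Lemma~1 of the paper).

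Second, and more seriously, your argument that $\nu(\{Y\})=0$ is circular. You write ``if $\nu(\{Y\})>0$ then $Y$ is computable, and then $X\leq_T Y$ would be computable,'' but $X\leq_T Y$ is precisely what you are trying to establish via $\Phi_{\nu\to\mu}$, and that step \emph{requires} $\nu(\{Y\})=0$. In fact the naive interval construction genuinely fails here: if $\nu$ has an atom $Y_0$, then every $X$ with $r_\mu(X)$ in the interior of the interval $\bigcap_n I_\nu(Y_0{\uh}n)$ (a set of positive $\mu$-measure containing many $\mu$-randoms) is sent to $Y_0$, and $\Phi_{\nu\to\mu}(Y_0)$ outputs only finitely many bits. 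The paper sidesteps this by citing the original sources; a complete proof needs an extra step (e.g., first pass through $\lambda$, or strip the atoms of $\nu$ and work with the computable conditional measure on the atomless part) to guarantee that the image $Y$ is not an atom.
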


We will give the basic idea of Levin-Kautz conversion procedure using the succinct approach due to Schnorr and Fuchs \cite{SchFuc77} in the context of converting biased randomness into unbiased randomness (i.e., randomness with respect to the Lebesgue measure).  For computable $\mu\in\P(\cs)$ and $\sigma\in\str$, we define two subintervals of $[0,1]$:
\[
(\sigma)_\lambda=\left[\sum_{i=1}^{|\sigma|}2^{-i}\sigma(i),\sum_{i=1}^{|\sigma|}2^{-i}\sigma(i)+2^{-|\sigma|}\right]
\]
and
\[
(\sigma)_\mu=\left[\sum_{\tau<_{\mathrm{lex}}\sigma;|\tau|=|\sigma|}\mu(\tau),\sum_{\tau\leq_{\mathrm{lex}}\sigma;|\tau|=|\sigma|}\mu(\tau)\right]
\]
(here $\leq_{\mathrm{lex}}$ defines the lexicographic ordering on strings of a fixed length). We define a Turing functional $\Phi_{\mu\rightarrow\lambda}$ as follows:  For $\sigma,\tau\in\str$, enumerate $(\sigma,\tau)$ into $S_{\Phi_{\mu\rightarrow\lambda}}$ if $(\sigma)_\mu\subseteq(\tau)_\lambda$.  Thus, given a $\mu$-random sequence $X$ as input, $\Phi_{\mu\rightarrow\lambda}$ treats $X$ as the representation of some $r\in[0,1]$ the bit values of which are determined by the values of the measure $\mu$.  For instance, the first bit of $r$ is a 0 if $r<\mu(0)$ and a 1 if $r>\mu(0)$ (and the procedure is undefined if $r=\mu(0)$).  $\Phi_{\mu\rightarrow\lambda}$ then outputs the standard binary generator of the real number $r$.   One can verify that the resulting Turing functional $\Phi_{\mu\rightarrow\lambda}$ is $\mu$-almost total and induces the Lebesgue measure on $\cs$.  

More generally, for computable measures $\mu,\nu$, one can similarly define an almost total functional $\Phi_{\mu\rightarrow\nu}$ that transforms $\mu$-randomness into $\nu$-randomness. Moreover, one can verify that, for non-computable $X\in\MLR_\mu$ and $Y\in\MLR_\nu$ such that $\Phi_{\mu\rightarrow\nu}(X)=Y$,
\begin{itemize}
\item[(i)] $(\Phi_{\mu\rightarrow\nu}\circ\Phi_{\nu\rightarrow\mu})(X)=X$, and
\item[(ii)] $(\Phi_{\nu\rightarrow\mu}\circ\Phi_{\mu\rightarrow\nu})(Y)=Y$.
\end{itemize}
Thus, given such a pair $X$ and $Y$, we clearly have $X\equiv_TY$.

We will consider this result in the context of strongly positive measures.  A measure $\mu$ on $\cs$ is \emph{strongly positive} if there is some $\delta\in(0,\frac{1}{2})$ such that
for every $\sigma\in\str$, $\mu(\sigma0\mid\sigma)\in[\delta,1-\delta]$; that is, all of the conditional probabilities associated with $\mu$ are bounded away from 0 and 1 by a fixed distance.  The main theorem we will prove in this section is an effective, pointwise version of a result due to Uyematsu and Kanaya \cite{UyeKan00}, who studied the extraction rate of the interval algorithm with respect to a general class of measures, namely the ergodic, shift-invariant ergodic measures.  Recall that for an ergodic, shift-invariant measure $\mu$ on $\cs$, the entropy of $\mu$ is defined to be
\[
h(\mu)=\lim_{n\rightarrow\infty}-\frac{1}{n}\sum_{|\sigma|=n}\mu(\sigma)\log\mu(\sigma).
\]

\begin{thm}\label{thm-lkrate}
Let $\mu$ and $\nu$ be computable, shift-invariant, ergodic measures that are strongly positive.  Then for every non-computable $A\in\MLR_\mu$, 
\[
\oi_{\Phi_{\mu\rightarrow\nu}}(A)=\frac{h(\mu)}{h(\nu)}.
\]
  In particular, in the case that $\nu=\lambda$, we have $\oi_{\Phi_{\mu\rightarrow\lambda}}(A)= h(\mu)$.
\end{thm}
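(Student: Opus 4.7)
The plan is to apply Hoyrup's effective Shannon--McMillan--Breiman theorem to both $A$ and $Y:=\Phi_{\mu\rightarrow\nu}(A)$, and then convert the resulting entropy-rate comparison into a rate comparison via the nested-interval construction that defines the Levin--Kautz procedure. Write $\phi$ for the canonical generator of $\Phi_{\mu\rightarrow\nu}$ and set $m(n)=|\phi(A\uh n)|$, so that $\phi(A\uh n)=Y\uh m(n)$. Since $A$ is non-computable, Theorem~\ref{thm-lk} yields $Y\in\MLR_\nu$, so the effective SMB theorem applies to both $A$ and $Y$ and gives
\[
\lim_{n\to\infty}\frac{-\log\mu(A\uh n)}{n}=h(\mu)\quad\text{and}\quad\lim_{m\to\infty}\frac{-\log\nu(Y\uh m)}{m}=h(\nu),
\]
the second limit being applicable once one notes that strong positivity forces $\mu(A\uh n)\to 0$ and hence $m(n)\to\infty$.

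The easy direction follows directly from the construction: by definition of $\Phi_{\mu\rightarrow\nu}$ the containment $(A\uh n)_\mu\subseteq(Y\uh m(n))_\nu$ holds, so $\mu(A\uh n)\leq\nu(Y\uh m(n))$. Taking negative logarithms, dividing by $n$, and plugging in the two SMB limits produces $m(n)\cdot h(\nu)(1+o(1))\leq n\cdot h(\mu)(1+o(1))$, so that
\[
\limsup_{n\to\infty}\frac{m(n)}{n}\leq\frac{h(\mu)}{h(\nu)}.
\]

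For the matching lower bound, the plan is to show that the nested containment is asymptotically tight in the sense that $\log(\nu(Y\uh m(n))/\mu(A\uh n))=o(n)$. The underlying mechanism is that maximality of $\phi(A\uh n)=Y\uh m(n)$ forces $(A\uh n)_\mu$ to straddle the $\nu$-midpoint of $(Y\uh m(n))_\nu$; strong positivity of $\mu$ then guarantees that the next input bit lands on the ``extending'' side of this midpoint with conditional probability in $[\delta,1-\delta]$, triggering an extension of the output, while strong positivity of $\nu$ controls how far the output can jump when it extends. Combining these two effects shows that the ratio process $\nu(Y\uh m(n))/\mu(A\uh n)$ behaves like a recurrent multiplicative random walk with bounded step sizes, so that its logarithm grows sublinearly in $n$. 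Once the $o(n)$ bound on the log-gap is in hand, reversing the SMB computation yields $\liminf_{n\to\infty}m(n)/n\geq h(\mu)/h(\nu)$, and the final clause for $\nu=\lambda$ is immediate from $h(\lambda)=1$.

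The main obstacle is this last step: converting the heuristic straddle-and-extend analysis --- which naturally produces an almost-sure statement for $\mu$-typical inputs --- into a pointwise bound along every $\mu$-Martin-L\"of random $A$. The expected resolution is to phrase the ratio analysis as a statement about a measure-preserving dynamical system induced on $\cs$ by the Levin--Kautz procedure, and then invoke an effective ergodic theorem of the sort used in Sections~\ref{sec-block} and~\ref{sec-ddg} (together with the effective SMB theorem of Hoyrup) to transfer the almost-sure conclusion to the Martin-L\"of random setting.
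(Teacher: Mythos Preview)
Your upper bound is exactly the paper's: the containment $(A\uh n)_\mu\subseteq(Y\uh m(n))_\nu$ gives $\mu(A\uh n)\leq\nu(Y\uh m(n))$, and the two SMB limits convert this into $\limsup_n m(n)/n\leq h(\mu)/h(\nu)$.

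For the lower bound you are working much harder than necessary, and the ``obstacle'' you flag is not actually present. Recall that $\oi_{\Phi_{\mu\rightarrow\nu}}(A)$ is a $\limsup$, so you do not need $\log\bigl(\nu(Y\uh m(n))/\mu(A\uh n)\bigr)=o(n)$ along \emph{all} $n$; it suffices to have the log-gap bounded along \emph{infinitely many} $n$. Your own straddling observation already delivers this as a deterministic, pointwise statement---no random-walk recurrence, no effective ergodic theorem required. This is precisely Kautz's lemma (Lemma~\ref{lem-kautz}(ii) in the paper): since $(A\uh n)_\mu$ straddles the $\nu$-split point of $(Y\uh m(n))_\nu$, strong positivity on both sides forces $\delta^2\cdot\nu(Y\uh m(n))\leq\mu(A\uh n)$ for infinitely many $n$ (roughly, those $n$ at which an output bit has just been emitted). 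Taking $-\log$ and dividing by $n$ along that subsequence gives
\[
\frac{-\log\mu(A\uh n)}{n}\leq\frac{-\log\nu(Y\uh m(n))}{n}+\frac{c}{n},
\]
so $\liminf_n(-\log\mu(A\uh n))/n\leq\limsup_n(-\log\nu(Y\uh m(n)))/n$. Combined with the upper bound and the SMB limit for $A$, this pins $\limsup_n(-\log\nu(Y\uh m(n)))/n$ at $h(\mu)$, and one more use of SMB for $B=Y$ factors this as $h(\nu)\cdot\limsup_n m(n)/n$.

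In short: the geometric mechanism you identified is correct, but it yields a uniform constant bound on the ratio along a subsequence, not merely a distributional statement. Extracting that constant is the whole content of the lower-bound step, and it removes the need for any ergodic-theoretic transfer.
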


Several remarks are in order.  First, by the Shannon source coding theorem \cite[Section 5.10]{CovTho12}, $\frac{h(\mu)}{h(\nu)}$ is the optimal rate for converting between $\mu$-randomness and $\nu$-randomness.  Second, Han and Hoshi \cite{HanHos97} showed that in the case that $\mu$ and $\nu$ are Bernoulli measures, $\rate(\Phi_{\mu\rightarrow\nu},\mu)=\frac{h(\mu)}{h(\nu)}$, but in the case that $\mu$ and $\nu$ are shift-invariant and ergodic, this is appears to be open (see \cite[Remark 14]{WatHan19}).

As a first step towards proving Theorem \ref{thm-lkrate}, we define an auxiliary function. Given $A\in\MLR_\mu$ and $B=\Phi_{\mu\rightarrow\nu}(A)$, let $\phi_{\mu\rightarrow\nu}$ be the canonical generator of $\Phi_{\mu\rightarrow\nu}$ and set
\[
g(n)=\max\{k\colon \phi_{\mu\rightarrow\nu}(A\uh n)\succeq B\uh k\}.
\]
Equivalently, $g(n)$ is the maximum value $k$ such that $(A\uh n)_\mu\subseteq(B\uh k)_\nu$.   It follows that the $\Phi_{\mu\rightarrow\nu}$-extraction rate of the computation $\Phi_{\mu\rightarrow\nu}(A)=B$ is $\oi_{\Phi_{\mu\rightarrow\nu}}(A)=\limsup_{n\rightarrow\infty}\frac{g(n)}{n}$.

%
%

We now calculate $\oi_{\Phi_{\mu\rightarrow\nu}}(A)$ for each $A\in\MLR_\mu$. We will make use of two additional results.  First, we use the following lemma due to Kautz:

\begin{lem}[Kautz \cite{Kau97}]\label{lem-kautz}
Suppose that $\mu$ and $\nu$ are computable and strongly positive, and let $\delta\in (0,\frac{1}{2})$ satisfy the condition that $\mu(\sigma 0\mid\sigma),\nu(\sigma 0\mid\sigma)\in[\delta,1-\delta]$ for every $\sigma\in\str$.  Suppose further that for $A,B\in\cs$ we have $\Phi_{\mu\rightarrow\nu}(A)=B$.  
\begin{itemize}
\item[(i)] For every $n\in\omega$, 
\[
\mu(A\uh n)\leq \nu(B\uh g(n)).
\]
\item[(ii)]There exists infinitely many $n\in\omega$ such that
\[
\delta^2\cdot\nu(B\uh g(n))\leq \mu(A\uh n).
\]

\end{itemize}
\end{lem}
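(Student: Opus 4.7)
Part (i) is immediate from the definitions. By the choice of $g(n)$ as the largest $k$ with $(A\uh n)_\mu \subseteq (B\uh k)_\nu$, the interval $(A\uh n)_\mu$ sits inside $(B\uh g(n))_\nu$, and identifying these with subintervals of $[0,1]$ of Lebesgue lengths $\mu(A\uh n)$ and $\nu(B\uh g(n))$ respectively forces the desired inequality.

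For (ii), the plan is to study the ratio $y_n := \mu(A\uh n)/\nu(B\uh g(n))$, which lies in $(0,1]$ by (i), and to argue that $\limsup_{n\to\infty} y_n \geq \delta^2$. First I would record two structural facts that follow from strong positivity:
\begin{itemize}
\item[(a)] At each step $n$, the interval $(A\uh n)_\mu$ must straddle the midpoint of $(B\uh g(n))_\nu$ that separates its two $\nu$-subhalves; otherwise $g(n)$ could be increased, contradicting its maximality.
\item[(b)] $g(n)\to\infty$, because $\mu(A\uh n)\leq(1-\delta)^n \to 0$ forces $(A\uh n)_\mu$ eventually into arbitrarily deep $\nu$-sub-intervals containing the common limit point $\alpha = \bigcap_n (A\uh n)_\mu$.
\end{itemize}

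Next I would focus on productive steps, namely indices $n$ with $g(n)>g(n-1)$. At such an $n$, the chosen half $(A\uh n)_\mu$ of $(A\uh (n-1))_\mu$ under the $\mu$-split has length at least $\delta\cdot\mu(A\uh (n-1))$ by strong positivity of $\mu$, and it lies entirely in the $\nu$-subhalf of $(B\uh g(n-1))_\nu$ selected by $B$, whose length is between $\delta\cdot\nu(B\uh g(n-1))$ and $(1-\delta)\cdot\nu(B\uh g(n-1))$ by strong positivity of $\nu$. Suppose for contradiction that $y_n<\delta^2$ for all $n$ past some stage. Then at each such step, the tiny $(A\uh n)_\mu$ inside $(B\uh g(n))_\nu$ (which by (a) must still straddle the midpoint) is so small that, after the next $\mu$-split, the chosen half fits not only into the selected $\nu$-subhalf but in fact into several nested further $\nu$-subhalves of $(B\uh g(n))_\nu$. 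This forces $g$ to jump by more than one bit, rescaling the denominator $\nu(B\uh g(n+1))$ by a product of several $\nu$-conditionals while the numerator $\mu(A\uh (n+1))$ contracts only by a single $\mu$-conditional. The resulting multiplicative jump in $y$ lifts it back above $\delta^2$, contradicting the assumption.

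The main obstacle is the detailed geometric accounting in this contradiction step: showing precisely that a ratio forced below $\delta^2$ triggers a multi-bit jump in $g$, and that the jump's rescaling factor is large enough to restore the ratio to at least $\delta^2$, all while remaining consistent with the maximality of $g$ at every step. The factor $\delta^2$ arises naturally as the product of one $\delta$ from the $\mu$-conditional probability lower-bounding the chosen $A$-half and one $\delta$ from the $\nu$-conditional probability lower-bounding the selected $B$-subhalf.
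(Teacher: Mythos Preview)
The paper does not supply a proof of this lemma; it is quoted from Kautz and used as a black box in the proof of Theorem~\ref{thm-lkrate}. So there is no in-paper argument to compare against, and I will assess your proposal on its own terms.

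Part (i) is correct exactly as you state it.

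For part (ii) you have the right objects in hand---the jump stages $n$ with $g(n)>g(n-1)$, and the two factors of $\delta$ coming from strong positivity of $\mu$ and of $\nu$---but the mechanism you propose (small $y_n$ forces a multi-level jump whose rescaling restores $y$) is not what drives the bound, and the contradiction framing is unnecessary. The inequality $y_n\geq\delta^2$ in fact holds at \emph{every} jump stage directly, for a reason you did not use: at a jump stage the \emph{parent} interval $(A\uh(n-1))_\mu$ protrudes past an endpoint of $(B\uh g(n))_\nu$, and this overhang is what supplies the lower bound.

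Here is the argument. Write $(B\uh g(n))_\nu=[a,b]$ with $\nu$-split point $c$, and $(A\uh(n-1))_\mu=[p,q]$ with $\mu$-split point $r$. Since $g(n-1)<g(n)$ we have $[p,q]\not\subseteq[a,b]$; say $p<a$ (the case $q>b$ is symmetric, and both cannot hold, else neither $\mu$-half of $[p,q]$ would fit in $[a,b]$). The half landing in $[a,b]$ is then $(A\uh n)_\mu=[r,q]$ with $r\geq a$, and by your observation (a) it straddles $c$, so $q>c$. Now
\[
\mu(A\uh n)=q-r\;\geq\;\delta(q-p)\;>\;\delta(q-a)\;>\;\delta(c-a)\;\geq\;\delta^2(b-a)=\delta^2\,\nu(B\uh g(n)),
\]
using successively strong positivity of $\mu$, the overhang $p<a$, the straddling $q>c$, and strong positivity of $\nu$. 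Since $g(n)\to\infty$ (your (b)) there are infinitely many jump stages, and (ii) follows.
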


Next, we use an effective version of the Shannon-McMillan-Breimann theorem due to Hoyrup \cite{Hoy12}.

\begin{thm}[Hoyrup \cite{Hoy12}]\label{thm-hoyrup}
Let $\mu$ be a computable, shift-invariant, ergodic measure on $\cs$.  Then for every $\mu$-Martin-L\"of random sequence $X\in\cs$,
\[
\lim_{n\rightarrow\infty}\frac{K(X\uh n)}{n}=\lim_{n\rightarrow\infty}\frac{-\log\mu(X\uh n)}{n}=h(\mu).
\]
\end{thm}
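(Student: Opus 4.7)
The plan is to address the two equalities separately: first establish the entropy limit $\lim_{n\rightarrow\infty} -\log\mu(X\uh n)/n=h(\mu)$ via an effective version of the classical Shannon-McMillan-Breiman argument, then deduce the Kolmogorov-complexity limit from the Levin-Schnorr characterization of $\mu$-Martin-L\"of randomness.

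For the entropy limit, I would follow Breiman's strategy of writing $-\log\mu(X\uh n)=\sum_{k=0}^{n-1}g_k(X)$ with $g_k(X)=-\log\mu(X(k)\mid X\uh k)$ via the chain rule, interpreting these Ces\`aro sums as ergodic averages of a limiting function $g$ along the shift $T:\cs\to\cs$. Classically, Breiman shows $g_k\to g$ in $L^1(\mu)$ and $\int g\,d\mu=h(\mu)$; the classical SMB theorem then follows from Birkhoff's ergodic theorem applied to $g$, together with an $L^1$ tail estimate. To effectivize, since $g$ is typically unbounded, I would approximate by truncations $g^{(M)}=\min(g,M)$, which are a.e.\ computable and bounded. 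Applying the effective Birkhoff theorem of G\'acs-Hoyrup-Rojas (Theorem \ref{thm-birk2}) to each $g^{(M)}$ yields, for every $X\in\MLR_\mu$,
\[
\lim_{n\rightarrow\infty}\frac{1}{n}\sum_{k<n}g^{(M)}(T^kX)=\int g^{(M)}\,d\mu,
\]
and $\int g^{(M)}\,d\mu\uparrow h(\mu)$ by monotone convergence as $M\to\infty$.

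The main obstacle is controlling the truncation error: one must show that
\[
\limsup_{n\rightarrow\infty}\frac{1}{n}\sum_{k<n}\bigl(g_k(X)-g^{(M)}(T^kX)\bigr)
\]
can be made arbitrarily small as $M\to\infty$, uniformly for all $X\in\MLR_\mu$. This requires bounding the $\mu$-measure of deviation sets $\bigl\{X:\sum_{k<n}\bigl(g_k(X)-g^{(M)}(T^kX)\bigr)>\epsilon n\bigr\}$ via Markov's inequality together with the $L^1$ integrability of $g-g^{(M)}$, then using the computability of $\mu$ to package the resulting uniformly $\Sigma^0_1$ sets into a $\mu$-Martin-L\"of test that $X\in\MLR_\mu$ must eventually avoid. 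Combining the truncated effective Birkhoff limit with the vanishing truncation error yields the entropy limit.

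Once the entropy limit is in hand, the complexity limit follows immediately from the Levin-Schnorr theorem: for computable $\mu$, every $X\in\MLR_\mu$ satisfies
\[
-\log\mu(X\uh n)-O(1)\leq K(X\uh n)\leq -\log\mu(X\uh n)+2\log n+O(1),
\]
where the lower bound characterizes membership in $\MLR_\mu$ and the upper bound uses the computability of $\mu$ to encode $X\uh n$ via its $\mu$-weight plus a prefix-free description of $n$. Dividing by $n$ and letting $n\to\infty$, the $O(\log n)/n$ term vanishes and we obtain $\lim_{n\rightarrow\infty}K(X\uh n)/n=\lim_{n\rightarrow\infty}-\log\mu(X\uh n)/n=h(\mu)$, completing the proof.
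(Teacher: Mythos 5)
The paper does not actually prove this statement: Theorem~\ref{thm-hoyrup} is imported verbatim from Hoyrup~\cite{Hoy12} and used as a black box in the proof of Theorem~\ref{thm-lkrate}, so there is no in-paper proof against which to compare your sketch, and you are effectively attempting to reprove a cited result. Your second step, transferring the entropy limit to the complexity limit via Levin--Schnorr, is sound: for computable $\mu$, $X\in\MLR_\mu$ yields $K(X\uh n)\ge -\log\mu(X\uh n)-O(1)$, the coding upper bound gives $K(X\uh n)\le -\log\mu(X\uh n)+2\log n+O(1)$, and dividing by $n$ eliminates the logarithmic error.

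The first step, however, has a gap at its foundation. Writing $-\log\mu(X\uh n)=\sum_{k<n}g_k(X)$ with $g_k(X)=-\log\mu(X(k)\mid X\uh k)$ is fine, but on the one-sided shift $T$ the term $g_k(X)$ is \emph{not} of the form $g(T^kX)$ for any fixed $g$, nor of the form $h_k(T^kX)$ for a convergent family $h_k$: it conditions on the prefix $X\uh k$, which is exactly the data that $T^k$ discards. So $\frac{1}{n}\sum_{k<n}g_k(X)$ is not a Birkhoff average of a single integrand, and Theorem~\ref{thm-birk2} cannot be applied to it directly, even after truncating in height. Your Markov-inequality step is therefore aimed at the wrong error: the obstruction is not that the would-be integrand is unbounded, it is that in the one-sided setting there is no such integrand at all. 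A correct effectivization must either pass to the two-sided natural extension, where $g_k(X)=-\log\mu(X_0\mid X_{-1},\dots,X_{-k})$ does converge to a limit $g_\infty$, and then invoke an effective Maker-type (``moving average'') ergodic theorem rather than plain Birkhoff, since one still faces $\sum_{k<n}g_k(T^kX)$, a triangular sum over a changing family; or approximate $\mu$ by $m$-step Markov measures, for which the conditional log-probability really is a Birkhoff sum of a bounded computable function, and then control the approximation error uniformly along $X\in\MLR_\mu$. As written, the sketch does not establish the first equality of the theorem.
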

With these pieces, we now turn to the proof of our theorem.

\begin{proof}[Proof of Theorem \ref{thm-lkrate}]
Let $\Phi_{\mu\rightarrow\nu}(A)=B$ for $A\in\MLR_\mu$.  By Theorem \ref{thm-lk}, we have $B\in\MLR_\nu$. Since $\mu$ and $\nu$ are strongly positive, choose $\delta\in(0,\frac{1}{2})$ such that $\mu(\sigma 0\mid\sigma),\nu(\sigma0\mid\sigma)\in[\delta,1-\delta]$ for every $\sigma\in\str$.  By part (i) of Lemma \ref{lem-kautz}, we have
\[
\mu(A\uh n)\leq \nu(B\uh g(n))
\]
for all $n\in\omega$.  Applying the negative logarithm to both sides and dividing through by $n$ yields
\[
\frac{-\log\nu(B\uh g(n))}{n}\leq \frac{-\log\mu(A\uh n)}{n}
\]
for all $n\in\omega$.  It thus follows that 
\begin{equation}\label{lk-eq}
\limsup_{n\rightarrow\infty}\frac{-\log\nu(B\uh g(n))}{n}\leq \limsup_{n\rightarrow\infty}\frac{-\log\mu(A\uh n)}{n}
\end{equation}
for all $n\in\omega$.  Next, by part (ii) of Lemma \ref{lem-kautz}, we have
\[
\delta^2\cdot\nu(B\uh g(n))\leq \mu(A\uh n)
\]
for infinitely many $n\in\omega$.  Again, applying the negative logarithm to both sides and dividing through by $n$ yields, for some $c\in\omega$
\[
\frac{-\log\mu(A\uh n)}{n}\leq \frac{-\log\nu(B\uh g(n))+c}{n}.
\]
for infinitely $n\in\omega$.  It thus follows that
\begin{align*}
\liminf_{n\rightarrow\infty}\frac{-\log\mu(A\uh n)}{n}&\leq \limsup_{n\rightarrow\infty}\frac{-\log\nu(B\uh g(n))+c}{n}\\
&\leq \limsup_{n\rightarrow\infty}\frac{-\log\nu(B\uh g(n))}{n}+\limsup_{n\rightarrow\infty}\frac{c}{n}\\
&=\limsup_{n\rightarrow\infty}\frac{-\log\nu(B\uh g(n))}{n}.
\end{align*}
Combining this inequality with (\ref{lk-eq}), we thus have
\begin{equation}\label{lk-eq2}
\liminf_{n\rightarrow\infty}\frac{-\log\mu(A\uh n)}{n}\leq\limsup_{n\rightarrow\infty}\frac{-\log\nu(B\uh g(n))}{n}\leq \limsup_{n\rightarrow\infty}\frac{-\log\mu(A\uh n)}{n}.
\end{equation}
By Theorem \ref{thm-hoyrup}, since $A\in\MLR_\mu$, it follows from our assumptions on $\mu$ that 
\begin{equation}\label{lk-eq3}
\liminf_{n\rightarrow\infty}\frac{-\log\mu(A\uh n)}{n}=\limsup_{n\rightarrow\infty}\frac{-\log\mu(A\uh n)}{n}=\lim_{n\rightarrow\infty}\frac{-\log\mu(A\uh n)}{n}=h(\mu).
\end{equation}
Combining (\ref{lk-eq2}) and (\ref{lk-eq3}), we conclude
\begin{equation}\label{lk-eq3.5}
\limsup_{n\rightarrow\infty}\frac{-\log\nu(B\uh g(n))}{n}=h(\mu).
\end{equation}

Next, we use the fact that for positive sequences $(a_n)_{n\in\omega}$ and $(b_n)_{n\in\omega}$ such that $\lim_{n\rightarrow\infty}a_n$ exists,
\[
\limsup_{n\rightarrow\infty}(a_n\cdot b_n)=(\limsup_{n\rightarrow\infty}a_n)(\limsup_{n\rightarrow\infty}b_n)
\]
along with Equation (\ref{lk-eq3.5}) and  the fact that 
\[
\lim_{n\rightarrow\infty}\frac{-\log\nu(B\uh g(n))}{g(n)}=h(\nu),
\]
which follows from Theorem \ref{thm-hoyrup}, to derive the following:

\begin{align*}\label{lk-eq4}
\begin{split}
h(\mu)=\limsup_{n\rightarrow\infty}\frac{-\log\nu(B\uh g(n))}{n}&=\limsup_{n\rightarrow\infty}\left(\frac{-\log\nu(B\uh g(n))}{g(n)}\frac{g(n)}{n}\right)\\
&=\limsup_{n\rightarrow\infty}\left(\frac{-\log\nu(B\uh g(n))}{g(n)}\right)\limsup_{n\rightarrow\infty}\left(\frac{g(n)}{n}\right)\\
&=\lim_{n\rightarrow\infty}\left(\frac{-\log\nu(B\uh g(n))}{g(n)}\right)\limsup_{n\rightarrow\infty}\left(\frac{g(n)}{n}\right)\\
&=h(\nu)\cdot\oi_{\Phi_{\mu\rightarrow\nu}}(A).
\end{split}
\end{align*}
From this we can conclude that 
\[
\oi_{\Phi_{\mu\rightarrow\nu}}(A)=\frac{h(\mu)}{h(\nu)}.\qedhere
\]
\end{proof}

As noted above after the statement of Theorem \ref{thm-lkrate}, determining $\rate(\Phi_{\mu\rightarrow\nu},\mu)$ appears to be an open question in the randomness extraction literature.  Essentially, this problem boils down to finding a uniform bound of the sequence of functions given by $\dfrac{|\phi_{\mu\rightarrow\nu}(X\uh n)|}{n}$ to apply the dominated convergence theorem to calculate $\rate(\Phi_{\mu\rightarrow\nu},\mu)$.

\section{Open questions}\label{sec-questions}

We conclude with several open questions.  First, there is a general question about generalizing the results from Sections \ref{sec-block}, \ref{sec-ddg}, and \ref{sec-lk} to apply to a broader class of Turing functionals:

\begin{Q}
What features of an almost total Turing functional $\Phi$ guarantee that 
\[
\oi_\Phi(X)=\rate(\Phi,\mu)
\]
for the appropriate choice of measure $\mu$ and all sufficiently $\mu$-random sequences $X$?
\end{Q}

Next, the results we have established in showing the level of randomness that is sufficient for a sequence to witness the extraction rate of a Turing functional do not tell us what level of randomness is \emph{necessary} for this to hold.  Thus we can ask:

\begin{Q}
For each of the classes of Turing functionals that we have discussed, what is the level of randomness necessary for a sequence to witness the associated extraction rate?
\end{Q}

\bibliographystyle{alpha}
\bibliography{random}

\end{document}